\theoremstyle{plain}
\newtheorem{theorem}[subsection]{Theorem}
\newtheorem{lemma}[subsection]{Lemma}
\newtheorem{defi}[subsection]{Definition}
\newtheorem{prop}[subsection]{Proposition}
\newcommand{\Nat}{\mathbb N}
\newcommand{\Stack}{\mathcal{S}}
\begin{document}

\title{Nearly Continuous Even Kakutani Equivalence of Strongly Rank One Transformations}
%\title{A Title Will Be Here.... Eventually}%title goes here%
\author{Daniel J. Rudolph \\ Bethany D. Springer}%authors go here%

\date{}

\thanks{}%grant support etc. goes here%
 \begin{abstract}
In ergodic theory, two systems are Kakutani equivalent if there exists a conjugacy between induced transformations.  In Measured Topological Orbit and Kakutani Equivalence, del Junco, Rudolph, and Weiss  defined nearly continuous even Kakutani equivalence as an orbit equivalence which restricts to a conjugacy between induced maps on nearly clopen sets.  This paper defines nearly continuous even Kakutani equivalence between two nearly continuous dynamical systems as a nearly continuous conjugacy between induced maps on nearly clopen sets of the same size and shows that this definition is equivalent to the definition given by del Junco, Rudolph, and Weiss.  The paper shows that, with an added restriction, if two systems are nearly continuously (non-even) Kakutani equivalent, then one system is isomorphic to an induced transformation of the other, and uses this result to prove that a class of transformations containing Chacon's map and called strongly rank one belongs to the same nearly continuous even Kakutani equivalence class as irrational rotations.
%abstract of course goes here
 \end{abstract}

\maketitle
\pagestyle{headings}\markboth{}{}%page headings go here

\section{Introduction}
In ergodic theory, we ask properties to hold modulo sets of measure zero.  In the following theory, dubbed nearly continuous dynamics, we add a topological component by asking properties to hold modulo $F_\sigma$ sets of measure zero: transformations must be homeomorphisms in the relative topology of some $G_\delta$ set of full measure; and we ask sets to be nearly clopen instead of measurable, i.e. clopen in the relative topology when restricted to a $G_\delta$ subset of full measure.  A nearly continuous dynamical system consists of a Polish probability space and a transformation which is an ergodic measure preserving homeomorphism when restricted to a $G_\delta$ subset of full measure which is invariant under the transformation.  We consider two transformations to be equivalent if they agree upon a $G_\delta$ subset of full measure.  

In the 1940s, S. Kakutani introduced a new type of equivalence between ergodic measure preserving transformations of finite measure spaces relevant to studying the relationships between measurable cross sections of measurable flows.  Two ergodic measure preserving transformations $S$ and $T$ were called Kakutani equivalent if they had isomorphic induced maps, or equivalently, if $S$ and $T$ could be realized as flows over measurable cross sections of a third system.  

In 1984, A. del Junco and D. Rudolph extended the definition of even Kakutani equivalence to ergodic $\mathbb{Z}^n$ actions in \cite{AR}, first by taking Katok cross-sections of a flow, and second by determining the existence of an orbit preserving injection with an extra asymptotic linearity condition.  In one dimension, this translated to the classical theory of Kakutani equivalence with an addition of an orbit equivalence between the two systems.  When A. del Junco, D. Rudolph, and B. Weiss explored Kakutani equivalence in the context of what they named measured topological dynamical systems \cite{JRW}, this element of orbit equivalence persisted in the definition.  The authors defined even Kakutani equivalence to be an orbit equivalence on sets of full measure which gave a conjugacy when restricted to induced transformations of nearly clopen subsets of the same measure.  We give a slightly different definition for nearly continuous even Kakutani equivalence, asking only for the conjugacy between induced transformations on nearly clopen subsets of the same measure.  The first aim of the paper is to prove the following theorem which demonstrates that our definition of nearly continuous even Kakutani equivalence is synonymous with the definition given by del Junco, Rudolph and Weiss:

\begin{theorem}\label{NCEKE}
If two nearly continuous dynamical systems $(X, \tau, \mu, T)$ and $(Y, \tau, \nu,S)$ are nearly continuously even Kakutani equivalent, then the nearly continuous conjugacy given by $\varphi$ such that $\varphi \circ T_A = S_B \circ \varphi$, where $A \subset X$ and $B \subset Y$ are nearly clopen sets, extends to a nearly continuous orbit equivalence.
\end{theorem}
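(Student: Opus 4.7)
The plan is to use the Kakutani skyscraper representation of both systems over their base sets $A$ and $B$ to extend $\varphi$ to a nearly continuous orbit equivalence $\tilde\varphi \colon X \to Y$. First I would restrict to invariant $G_\delta$ subsets of full measure on which $T$, $S$, and $\varphi$ are homeomorphisms, $A$ and $B$ are clopen, and the return-time functions $r_T \colon A \to \Nat$ and $r_S \colon B \to \Nat$ are continuous. This identifies $X$ with the skyscraper $\{(x, i) : x \in A,\ 0 \le i < r_T(x)\}$ via $(x, i) \leftrightarrow T^i x$, and similarly identifies $Y$ with the skyscraper over $B$ of height $r_S$. Pushing the second representation back through $\varphi$, both $X$ and $Y$ become skyscrapers over the common base system $(A, T_A, \mu|_A)$ with roof functions $r_T$ and $\hat r_S := r_S \circ \varphi$, and Kac's lemma together with the evenness hypothesis $\mu(A) = \nu(B)$ gives $\int_A r_T\, d\mu = \int_A \hat r_S\, d\mu = 1$.

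The construction now reduces to producing, orbit by orbit, a bijection between the $T$-orbit of each $x_0 \in A$ in $X$ and the $S$-orbit of $\varphi(x_0)$ in $Y$ that maps each return $x_k := T_A^k x_0$ to $\varphi(x_k)$. Parametrize the $T$-orbit by $\Zee$ and mark the integers $s_k = \sum_{j=0}^{k-1} r_T(x_j)$ (with the obvious convention for $k < 0$) at which visits to $A$ occur; similarly mark $t_k = \sum_{j=0}^{k-1} \hat r_S(x_j)$ on the $S$-orbit. Enumerate the unmarked $T$-positions in increasing order as $u_\ell$ for $\ell \in \Zee$ (with $u_0$ the smallest strictly positive unmarked integer), and the unmarked $S$-positions analogously as $v_\ell$. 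Define $\sigma \colon \Zee \to \Zee$ by $\sigma(s_k) = t_k$ and $\sigma(u_\ell) = v_\ell$, and set $\tilde\varphi(T^i x_0) = S^{\sigma(i)} \varphi(x_0)$. Whenever $0 < \mu(A) < 1$, both the marked and unmarked sets are bi-infinite on each side, so $\sigma$ is a genuine bijection of $\Zee$; moreover $\sigma(0) = t_0 = 0$, so $\tilde\varphi$ automatically agrees with $\varphi$ on $A$.

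The hard part will be verifying that $\tilde\varphi$ is nearly continuous. The value $\sigma(i)$ depends on a finite window of the cocycle values $\{r_T(x_k), \hat r_S(x_k)\}$, of size growing with $|i|$, and each such value is locally constant in $x_0$ by continuity of $r_T$ and $r_S$ on the chosen $G_\delta$ sets. I would therefore decompose each tower level into nearly clopen cylinders on which the relevant window of cocycle values is constant, verify that $\tilde\varphi$ restricts to a continuous injection on each such cylinder (as it is then a shift by a locally constant integer), and then apply a countable exhaustion together with a diagonal intersection to produce a single $G_\delta$ set of full measure on which $\tilde\varphi$ is a homeomorphism sending $T$-orbits to $S$-orbits and restricting to $\varphi$ on $A$. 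This is exactly the conclusion of the theorem.
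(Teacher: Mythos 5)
Your overall strategy (represent both systems as skyscrapers over $A$, pull the $S$-skyscraper back through $\varphi$ to get roof functions $r_T$ and $\hat r_S=r_S\circ\varphi$ on the common base, build a bijection of $\mathbb{Z}$ along each orbit sending the $k$-th return to $A$ to the $k$-th return to $B$, and prove near continuity via a clopen decomposition by cocycle windows) is the same as the paper's. But the specific rule you propose for matching the non-return positions is not well defined, and this is a genuine gap. Your pairing $u_\ell\mapsto v_\ell$ is anchored by taking $u_0$ and $v_0$ to be the smallest strictly positive unmarked integers \emph{relative to the chosen base point} $x_0\in A$. Different visits to $A$ along the same orbit give different origins, hence different anchors, and the resulting pairings genuinely disagree. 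Concretely, take $r_T\equiv 3$ along an orbit while $\hat r_S(x_0)=1$ and $\hat r_S(x_1)=5$. Anchored at $x_0$, the unmarked $T$-positions are $1,2,4,5,\dots$ and the unmarked $S$-positions are $2,3,4,5,\dots$, so $T^4x_0=u_2\mapsto v_2=S^4\varphi(x_0)$; anchored at $x_1=T^3x_0$ (whose partner is $S\varphi(x_0)$), the first positive unmarked positions are $T^4x_0$ and $S^2\varphi(x_0)$, so $T^4x_0\mapsto S^2\varphi(x_0)$. If you repair this by always anchoring at the most recent return to $A$, the map fails to be injective (in the same example both $T^1x_0$ and $T^4x_0$ land on $S^2\varphi(x_0)$); if you insist on a single anchor per orbit, there is no measurable, let alone nearly continuous, selection of one point from each orbit of an ergodic transformation. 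A telling symptom is that, although you record $\int_A r_T\,d\mu=\int_A\hat r_S\,d\mu$ from Kac's lemma and evenness, your construction of $\sigma$ never uses it: an order isomorphism between two bi-infinite unmarked sets exists unconditionally, so it cannot by itself yield a measure-preserving orbit equivalence.

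The paper avoids this by replacing the order isomorphism of unmarked sets with an intrinsic greedy rule (the ``piles and pits'' machine): the point at height $h(x)$ in the $T$-fiber over $\tilde x=T^{-h(x)}(x)$ is deposited into the first $S$-fiber, at or to the right of the one below $\tilde x$, that still has room after all points from earlier fibers have been placed. The shift count $n(x)$ and depth $d(x)$ are computed from partial sums of $r_{A_0}$ and $r_{B_0}\circ\varphi$ based at $\tilde x$, so the rule is consistent along the entire orbit and needs no arbitrary anchor. Evenness enters exactly at this point: $n(x)$ is finite almost everywhere because $f=r_{A_0}-r_{B_0}\circ\varphi$ satisfies $\int f\,d\mu_{A_0}=0$ when $\mu(A)=\nu(B)$, so its ergodic partial sums drop to $0$ or below almost surely. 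Once you adopt such an intrinsic matching rule, your continuity argument (decomposition into nearly clopen sets on which the map is $S^{d}\circ\varphi\circ T_{A}^{n}\circ T^{-h}$ for fixed integers) goes through essentially as you describe and matches the paper's decomposition into the sets $A_{n,h,d}$.
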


Rudolph and Roychowdhury \cite{RandR2} proved that all adding machines are nearly continuously even Kakutani equivalent, and Rudolph and Dykstra \cite{Dykstra} added irrational rotations to the class.  We prove the following in the case of non-even Kakutani equivalence, and apply it to show that a class of transformations which includes Chacon's map and is here-in named strongly rank one belongs to the equivalence class of irrational rotations.

\begin{theorem}\label{NCKE}

Let $\left(X, \tau, \mu, T \right)$ and $\left(Y,\tau,\nu, S \right)$ be two nearly continuous dynamical systems.  Let $A \subset X$ and $B \subset Y$ be nearly clopen subsets such that $\mu(A) > \nu(B)>0$ and $T_A$ n.c. conjugate to $S_B$.   If $\left(X, \tau, \mu, T \right)$ is nearly uniquely ergodic, then there exists a nearly clopen set $\bar B \subset Y$ such that $S_{\bar B}$ is nearly continuously conjugate to $T$.  

\end{theorem}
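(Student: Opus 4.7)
The plan is to realize both $T$ and $S$ as discrete suspensions (Kakutani skyscrapers) over their induced versions and to build $\bar B$ by a block construction driven by the measure inequality. Let $r\colon A\to\mathbb{N}$ and $r'\colon B\to\mathbb{N}$ denote the first-return times of $T$ to $A$ and of $S$ to $B$, respectively, so that $X$ is canonically the skyscraper over $(A,T_A)$ with heights $r$ and $Y$ is the skyscraper over $(B,S_B)$ with heights $r'$. Transporting $r$ through the given n.c.\ conjugacy $\phi\colon A\to B$ yields $\tilde r:=r\circ\phi^{-1}$ on $B$, and the skyscraper over $(B,S_B)$ with heights $\tilde r$ is n.c.\ conjugate to $T$ on $X$. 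Kac's return-time formula gives $\int_B\tilde r\,d\nu_B = 1/\mu(A)$ and $\int_B r'\,d\nu_B = 1/\nu(B)$, so the hypothesis $\mu(A)>\nu(B)$ forces $\int\tilde r<\int r'$: the $\tilde r$-tower is on average shorter than the $r'$-tower.

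The naive candidate $\bar B := \bigcup_{b\in B}\{b,S(b),\ldots,S^{\tilde r(b)-1}(b)\}$ gives a valid n.c.\ conjugacy between $T$ and $S_{\bar B}$ precisely when $\tilde r(b)\le r'(b)$ holds pointwise, which will generally fail. The fix is to aggregate orbits into blocks. Near unique ergodicity of $T$ passes to $T_A$ and hence to its conjugate $S_B$, so Birkhoff averages of the nearly continuous functions $\tilde r$ and $r'$ under $S_B$ converge uniformly. Pick $K$ large enough that $\sum_{i<K}\tilde r(S_B^i b)<\sum_{i<K}r'(S_B^i b)$ uniformly in $b$, and form a nearly clopen $K$-Rokhlin tower over $B$ with base $B_0$, covering all but a set of small measure. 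For each $b\in B_0$, inside the $S$-orbit segment of length $L(b):=\sum_{i<K}r'(S_B^i b)$ starting at $b$ in $Y$, declare the first $\tilde L(b):=\sum_{i<K}\tilde r(S_B^i b)$ points to belong to $\bar B$, and let $\psi$ send the $j$-th point (in tower order) of the corresponding $T$-orbit block in $X$ to $S^j(b)$. Block-to-block consistency is automatic, because by construction no point of $\bar B$ lies strictly between the last $\bar B$-point of one block and the base of the next.

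The main obstacle is the nearly continuous treatment of the Rokhlin remainder and verifying that $\bar B$ is nearly clopen and $\psi$ is nearly continuous in the limit. I would iterate with nested nearly clopen Rokhlin towers of heights $K_n\to\infty$, exhausting the remainder into a $\mu$-null $F_\sigma$ set and recursively defining $\bar B$ and $\psi$ on the progressively smaller complements. Near-clopenness of $\bar B$ then follows from its description as a countable union of $S$-shifts of nearly clopen level sets of $\tilde r$ and $r'$; near-continuity of $\psi$ follows from that of $\phi$ and $S_B$ combined with the nearly clopen nature of the block decomposition. This limit-and-refinement argument is expected to invoke the nearly-continuous machinery developed in the proof of Theorem~\ref{NCEKE} and in \cite{JRW,Dykstra}.
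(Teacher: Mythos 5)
Your central mechanism is exactly the paper's: by Kac's formula the hypothesis $\mu(A)>\nu(B)$ makes the mean return time to $A$ strictly smaller than the mean return time to $B$, and near unique ergodicity upgrades this to a uniform statement over all sufficiently long blocks, so that aggregated $T$-return blocks can be dropped whole into the corresponding aggregated $S$-return blocks. The gap is in how you aggregate. A Rokhlin tower of height $K$ over $B$ does not tile orbits: consecutive visits of an $S_B$-orbit to the tower base $B_0$ are separated by at least $K$ steps but in general by more (the Rokhlin remainder sits in between), so the $S$-orbit segment of length $L(b)=\sum_{i<K}r'(S_B^i b)$ starting at $b\in B_0$ does not abut the segment starting at the next visit to $B_0$. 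Your claim that block-to-block consistency is ``automatic'' therefore fails precisely at these junctions, and the points of $X$ sitting over the remainder receive no image; the proposed remedy (nested towers of heights $K_n\to\infty$ exhausting the remainder) is exactly where all the difficulty would concentrate, and it is not carried out.

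The paper sidesteps the remainder entirely. It chooses a nearly clopen $A'\subset A_0$ of relative measure less than $\frac{1}{2N}$ whose $T_{A_0}$-return time is at least $2N$ at every point, and sets $B'=\phi(A')$. Consecutive returns to $A'$ tile every orbit into blocks, each consisting of at least $2N$ consecutive $A_0$-return blocks; since $\phi$ conjugates $T_{A_0}$ to $S_{B_0}$, the block over $x\in A'$ contains exactly as many $A_0$-returns as the corresponding block over $\phi(x)$ contains $B_0$-returns. The uniform ergodic estimates, with $\epsilon<\frac{1}{2}\left(\frac{1}{\nu(B)}-\frac{1}{\mu(A)}\right)$, then give pile strictly shorter than pit for every block simultaneously (here it matters that uniform convergence holds for all $n\geq N$ at once, since the block lengths vary). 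The map is simply $\hat\phi(x)=S^{h(x)}\circ\phi\circ T^{-h(x)}(x)$ with $h(x)$ the time since the last visit to $A'$, defined on a full carrier with no iteration, and $\bar B=\hat\phi(X)$ is nearly clopen because $\hat\phi$ is piecewise a composition of homeomorphisms on a clopen decomposition. If you replace your fixed-height Rokhlin tower by this induced-set aggregation, your argument closes and coincides with the paper's.
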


The structure of the paper is as follows:  section 2 gives basic definitions.  Section 3 provides proof of Theorem \ref{NCEKE}.  Section 4 defines nearly unique ergodicty and provides the proof of Theorem \ref{NCKE}.  Section 5 defines strongly rank one and shows they are nceke to irrational rotations.

\section{Definitions}  

We choose the state space $X$ to be an uncountable Polish space:  a separable topological space neither finite nor homeomorphic to $\Nat$ permitting at least one metric for which the space is complete.  The space is endowed with a non-atomic Borel probability measure $\mu$ of full support.  Recall that Polish spaces are $G_\delta$ subspaces of compact metric spaces.  The topology $\tau$ is the Polish topology, and whenever we write $(X,\tau,\mu)$ we mean precisely such a space, referred to as a \textit{Polish probability space.}

\begin{defi}
A $G_\delta$ subset of full measure $X_0 \subseteq X$ is called a \textbf{nearly full} subset.
\end{defi}

Because the support of $\mu$ is $X$, any nearly full subset is dense in $X$.   Property R of an object holds \textbf{nearly} on the space $X$ if there exists a nearly full subset $X_0 \subseteq X$ such that $R$ holds in the relative topology of $X_0$.  For instance:

\begin{defi} A set $C \subseteq X$ is said to be \textbf{nearly clopen in $X_0$} if there exists a nearly full subset $X_0 \subseteq X$ such that $C \cap X_0$ is clopen in the relative topology of $X_0$.  
\end{defi}

% Another characterization from (dJ,R,W lemma 2.7):  a set is nearly clopen iff it is nearly equal to a clopen set $A$ such that $\mu(\partial A) = 0$. 

\begin{defi} 
$A \subseteq X$ is \textbf{nearly equal} to $B \subseteq X $ in a Polish probability space $(X,\tau, \mu)$ if there exists a nearly full set $X_0$ such that $A \cap X_0 = B \cap X_0$.
\end{defi}

Two nearly equal sets have the same measure.  

\begin{prop}\label{AnclopenB}
If $(X,\tau,\mu)$ is a Polish probability space, $A \subseteq X$ is nearly clopen, and $B$ is nearly equal to $A$, then $B$ is nearly clopen.
\end{prop}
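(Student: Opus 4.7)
The plan is to combine the two witnesses into one nearly full set on which both the clopen property of $A$ and the equality of $A$ and $B$ hold simultaneously. Concretely, I would begin by unpacking the hypotheses: since $A$ is nearly clopen, fix a nearly full $X_1 \subseteq X$ with $A \cap X_1$ clopen in the relative topology of $X_1$; since $B$ is nearly equal to $A$, fix a nearly full $X_2 \subseteq X$ with $A \cap X_2 = B \cap X_2$.

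Next I would set $X_0 := X_1 \cap X_2$ and verify that $X_0$ is nearly full. It is $G_\delta$ as the intersection of two $G_\delta$ sets, and its complement $(X \setminus X_1) \cup (X \setminus X_2)$ has measure zero since each term does, so $\mu(X_0) = 1$.

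Then I would check that $B \cap X_0$ is clopen in the relative topology of $X_0$. By construction $A \cap X_0 = A \cap X_1 \cap X_2 = B \cap X_1 \cap X_2 = B \cap X_0$, so it suffices to show $A \cap X_0$ is clopen in $X_0$. But $X_0 \subseteq X_1$, and the relative topology on $X_0$ inherited from $X$ agrees with the relative topology inherited from $X_1$ (transitivity of subspace topologies), so the set $A \cap X_0 = (A \cap X_1) \cap X_0$ is the intersection of a clopen subset of $X_1$ with the subspace $X_0$, hence clopen in $X_0$. Therefore $B \cap X_0$ is clopen in $X_0$, exhibiting $B$ as nearly clopen.

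There is no real obstacle here; the whole argument is a routine diagonalization of witnesses (intersect the two nearly full sets) together with transitivity of the subspace topology. The only thing to be careful about is making sure that taking the further subspace $X_0 \subseteq X_1$ preserves clopenness of $A \cap X_1$, which is immediate since clopen sets restrict to clopen sets under passage to a smaller subspace.
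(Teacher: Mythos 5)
Your argument is correct and is exactly the routine intersection-of-witnesses argument the paper has in mind (the paper in fact omits the proof of this proposition entirely, treating it as immediate). Nothing further is needed.
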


Near equality of nearly clopen sets is an equivalence relation.

Let $[A]$ denote the equivalence class of $A$, and let $\mathcal{A} = \mathcal{A}(x)$ denote the set of equivalence classes of nearly clopen sets of a Polish probability space $(X,\tau\,u)$.

\begin{prop}
$\mathcal{A}$ is a Boolean algebra.
\end{prop}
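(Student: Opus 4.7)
The plan is to verify the Boolean algebra axioms by working with concrete representatives and exploiting the fact that, in any topological space, the clopen sets form a Boolean algebra. The key preliminary observation is that the intersection of countably many nearly full subsets is again nearly full: a countable intersection of $G_\delta$ sets is $G_\delta$, and a countable intersection of full-measure sets has full measure. This will let me pass two or three nearly clopen representatives onto a single nearly full subset where they are simultaneously clopen in the relative topology.

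Given $[A], [B] \in \mathcal{A}$, I would first choose representatives $A, B$ that are nearly clopen on nearly full sets $X_A, X_B$ respectively, and set $X_0 = X_A \cap X_B$. Then $A \cap X_0$ and $B \cap X_0$ are clopen in the relative topology of $X_0$, so $(A \cup B) \cap X_0$, $(A \cap B) \cap X_0$, and $(X \setminus A) \cap X_0 = X_0 \setminus (A \cap X_0)$ are clopen in $X_0$. This shows that $A \cup B$, $A \cap B$, and $X \setminus A$ are all nearly clopen, so the set operations produce nearly clopen sets. Similarly, $\emptyset$ and $X$ are trivially nearly clopen (take $X_0 = X$), and will serve as the $0$ and $1$ of the algebra.

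Next, I would verify that these operations descend to the quotient by near equality. Suppose $A' \in [A]$ and $B' \in [B]$, witnessed by nearly full sets $X_1, X_2$ with $A \cap X_1 = A' \cap X_1$ and $B \cap X_2 = B' \cap X_2$. On the nearly full set $X_1 \cap X_2$ we have $(A \cup B) \cap (X_1 \cap X_2) = (A' \cup B') \cap (X_1 \cap X_2)$, and analogously for intersections and complements, so $A \cup B$ is nearly equal to $A' \cup B'$, etc. By Proposition \ref{AnclopenB} the resulting sets remain nearly clopen regardless of representative, so $[A] \vee [B] := [A \cup B]$, $[A] \wedge [B] := [A \cap B]$, and $\neg [A] := [X \setminus A]$ are well-defined. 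The Boolean algebra identities (commutativity, associativity, distributivity, absorption, De Morgan, complementation laws) then follow immediately by applying the corresponding set-theoretic identities to representatives on a common nearly full subset.

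No step presents a genuine obstacle; the argument is essentially bookkeeping, and the main point to keep in mind is that intersecting finitely many nearly full subsets stays nearly full, which is what makes the transfer from pointwise set operations to equivalence-class operations work uniformly.
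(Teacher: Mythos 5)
Your proposal is correct and follows essentially the same route as the paper, which simply asserts the identities $[A]\cap[B]=[A\cap B]$, $[A]^c=[A^c]$, and $[A]\cup[B]=[A\cup B]$ without elaboration; you have supplied the bookkeeping (intersecting the nearly full witness sets, checking well-definedness on equivalence classes) that those assertions implicitly rely on. Nothing is missing or different in substance.
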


\begin{proof}
This follows from the facts that for nearly clopen sets $A$ and $B$ of $X$, $[A] \cap [B] = [A \cap B]$, $[A]^c = [A^c]$ and $[A] \cup [B] = [A \cup B]$.
\end{proof}
For assurance of the abundance of nearly clopen sets in Polish probability spaces, we refer the reader to \cite{JRW}.  

\begin{defi}
A function $f: X \rightarrow Y$ where $X$ is a Polish probability space is said to be \textbf{nearly continuous on $X_0$} if there exists a nearly full subset $X_0 \subset X$ such that $f|_{X_0}$ is continuous in the relative topology of $X_0$.  \end{defi}

We say that $f$ and $f'$ are nearly equal as functions on $X$ if there exists a nearly full set $X' \subseteq X$ such that $f|_{X'} = f|_X$.  

\begin{prop}
If $f$ is nearly continuous and $g$ is nearly equal to $f$, then $g$ is nearly continuous.
\end{prop}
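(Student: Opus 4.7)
The plan is to produce an explicit nearly full set on which $g$ is continuous by intersecting the two nearly full sets supplied by hypothesis. Since nearly full sets are exactly the $G_\delta$ subsets of full measure, and both properties are preserved under finite intersection, the intersection is again nearly full, which is the workhorse of the argument.

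More precisely, let $X_0$ be a nearly full set witnessing the near continuity of $f$, so $f|_{X_0}$ is continuous in the relative topology of $X_0$, and let $X'$ be a nearly full set witnessing the near equality of $g$ and $f$, so $f|_{X'} = g|_{X'}$. Set $X'' = X_0 \cap X'$. First I would observe that $X''$ is $G_\delta$ as a finite intersection of $G_\delta$ sets, and that $\mu(X'') = 1$ since both $X_0$ and $X'$ have full measure; hence $X''$ is nearly full.

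Next I would note that $f|_{X''}$ is continuous in the relative topology of $X''$, because the restriction of the continuous map $f|_{X_0}$ to the subspace $X'' \subseteq X_0$ is continuous in the subspace topology inherited from $X_0$, which coincides with the relative topology of $X''$ in $X$. Since $X'' \subseteq X'$ we have $f|_{X''} = g|_{X''}$, so $g|_{X''}$ is continuous in the relative topology of $X''$, establishing that $g$ is nearly continuous.

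There is no real obstacle here: the proposition is essentially a bookkeeping statement about the compatibility of the ``nearly'' prefix with the equivalence relation of near equality, analogous to Proposition \ref{AnclopenB} for nearly clopen sets. The only point requiring a moment of care is verifying that the relative topology of $X''$ viewed inside $X_0$ agrees with the relative topology of $X''$ viewed inside $X$, which is immediate from the definition of subspace topologies and the transitivity $X'' \subseteq X_0 \subseteq X$.
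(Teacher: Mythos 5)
Your argument is correct and is precisely the straightforward intersection argument the paper has in mind (the paper omits the proof, remarking only that it is straightforward): intersect the two witnessing nearly full sets, note the intersection is again $G_\delta$ of full measure, and observe that $g$ agrees there with a continuous restriction of $f$. Nothing further is needed.
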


The proof is straightforward, and near equality of functions gives an equivalence relation.

\begin{defi} Let $\phi:X \rightarrow Y$ be a transformation of Polish probability spaces $(X,\tau,\mu)$ and $(Y,\tau,\nu)$.  If there exists nearly full subsets $X_0 \subseteq X$ and $Y_0 \subseteq Y$ such that $\phi: X_0 \rightarrow Y_0$ is a homeomorphism, then $\phi$ is a \textbf{near homeomorphism} and the spaces are called \textbf{nearly homeomorphic}.
\end{defi}

Two near homeomorphisms $\phi$ and $\phi'$ are nearly equal if there exists $X_1 \subseteq X$ and $Y_1 \subseteq Y$, nearly full subsets, such that $\phi: X_1 \rightarrow Y_1$ is a homeomorphism and $\phi|_{X_1} = \phi'|_{X_1}$, $\phi^{-1}|_{Y_1} = \phi'^{-1}|_{Y_1}$.

We shall occasionally abbreviate nearly continuously as n.c..

\begin{defi}
Let $(X,\tau,\mu)$ be a Polish probability space.  A nearly continuous dynamical system $(X, \tau, \mu, T)$ is given by a subset $X_0 \subseteq X$ and a map $T: X_0 \rightarrow X_0$ such that:
\begin{enumerate}
\item $X_0$ is a nearly full, $T$-invariant subset of $X$ and;

\item $T$ is an ergodic homeomorphism in the relative topology of $X_0$ preserving the measure, $\mu$, restricted to $X_0$.

\end{enumerate}
\end{defi}

\begin{defi}
Let $(X,\tau,\mu,T)$ be a nearly continuous dynamical system.  A nearly full subset is called a \textbf{carrier} if it is invariant under $T$.
\end{defi}

\begin{prop}\label{carrier}
The intersection of two carriers is a carrier.
\end{prop}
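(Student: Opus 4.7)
The plan is to verify that each of the three defining properties of a carrier is preserved under binary intersection. Recall that a carrier is a nearly full subset of $X$ (equivalently, a $G_\delta$ set of full measure) which is also $T$-invariant, so let $Y_1$ and $Y_2$ be two such carriers.

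First, for the $G_\delta$ condition, I would write each $Y_i$ as a countable intersection of open sets, $Y_i = \bigcap_n U_{i,n}$, so that $Y_1 \cap Y_2 = \bigcap_{i,n} U_{i,n}$ is again a countable intersection of open sets. Second, for the full-measure condition, applying countable subadditivity to the complements gives $\mu((Y_1 \cap Y_2)^c) = \mu(Y_1^c \cup Y_2^c) \leq \mu(Y_1^c) + \mu(Y_2^c) = 0$, so $\mu(Y_1 \cap Y_2) = 1$. Third, for $T$-invariance, from $T(Y_i) = Y_i$ we obtain $T(Y_1 \cap Y_2) \subseteq T(Y_1) \cap T(Y_2) = Y_1 \cap Y_2$; applying the identical argument to $T^{-1}$, which is well-defined because $T$ is a homeomorphism on its carrier $X_0$, yields the reverse containment.

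There is no substantive obstacle in this argument. The proposition is essentially the observation that the class of carriers is closed under binary intersection, and each of the three defining conditions individually passes to finite intersections by standard facts about $G_\delta$ sets, null sets, and set-theoretic images under bijections.
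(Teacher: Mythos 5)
Your proof is correct and follows essentially the same route as the paper, which simply notes $T^{-1}(X_1 \cap X_2) = T^{-1}(X_1) \cap T^{-1}(X_2) = X_1 \cap X_2$ and leaves the $G_\delta$ and full-measure closure implicit. Your version just spells out those two routine verifications explicitly, and your invariance argument via forward images plus injectivity is an equivalent variant of the paper's preimage computation.
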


\begin{proof}
Let $(X,\tau,\mu,T)$ be a nearly continuous dynamical system, and let $X_1$ and $X_2$ be two carriers for $T$.  Then $T^{-1}(X_1 \cap X_2) = T^{-1}(X_1) \cap T^{-1}(X_2) = X_1 \cap X_2.$
\end{proof}

\begin{defi}
For a Polish probability space $(X,\tau,\mu)$, let $\mathcal{G} = \mathcal{G}(X) = \{ (X, \tau, \mu, T) \}$, the set of all nearly continuous dynamical systems on $X$.
\end{defi}

Two transformations $T \in \mathcal{G}$ and $S \in \mathcal{G}$ are equivalent if they have a common carrier $X'$ such that $T|_{X'} = S|_{X'}$.  This is an equivalence relation.

Nearly clopen sets play an important role in the study of induced transformations.  Let $(X,\tau,\mu,T)$ be a nearly continuous dynamical system, and $A \subseteq X$ be nearly clopen.  $T$ is only defined on a carrier $X_0 \subseteq X$, and $A \cap X'$ is clopen in the relative topology of a nearly full set $X' \subseteq X$.  It is possible that $T$ is not defined on $A$ or on $A \cap X'$.  Let $X_1 = \cap_{i = \infty}^\infty T^{-i}(X_0 \cap X')$, giving a carrier of $T$ such that $A_1 = A \cap X_1 \in [A]$ is clopen in $X_1$and $T$ is defined on $A_1$.  Then, define the return time function $r_{A_1}: A_1 \rightarrow \mathbb{N}$ by $r_{A_1}(x) = \inf \{ r>0 : (T|_{X_1})^r (x) \in A_1 \}$.

\begin{lemma}\label{returntime} The return time function to a nearly clopen set is nearly continuous.  
\end{lemma}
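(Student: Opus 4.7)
The plan is to produce a $G_\delta$ subset $A_0$ of $A_1$ of full measure on which $r_{A_1}$ is defined and continuous as a map into the discrete space $\mathbb{N}$. The naive set of points with finite return time is an $F_\sigma$, not a $G_\delta$, so I cannot simply throw away the non-recurrent points; instead, I will use the ergodicity of $T$ to enlarge this measure-zero bad set to a $G_\delta$ null set whose complement in $A_1$ is the desired $G_\delta$.

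The first step is to observe that for each $n\ge 1$, the level set
\[
\{x\in A_1:r_{A_1}(x)=n\}=A_1\cap T^{-1}(A_1^c)\cap\cdots\cap T^{-(n-1)}(A_1^c)\cap T^{-n}(A_1)
\]
is a finite intersection of sets that are clopen in $X_1$, since $A_1$ is clopen in $X_1$ and $T$ is a homeomorphism of $X_1$. Hence each level set is clopen in $A_1\cap X_1$.

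The second step is to produce the carrier. Set
\[
X_2\;=\;\Bigl(\bigcap_{N\ge 1}\bigcup_{n\ge N}T^{-n}(A_1)\Bigr)\cap\Bigl(\bigcap_{N\ge 1}\bigcup_{n\ge N}T^{n}(A_1)\Bigr),
\]
the set of $x\in X_1$ whose forward and backward orbits hit $A_1$ infinitely often. Each inner union is open in $X_1$, so $X_2$ is $G_\delta$ in $X_1$. It is clearly $T$-invariant, and it has full measure by the Poincar\'e recurrence theorem combined with ergodicity of $T$ (the set of points whose forward orbit visits $A_1$ only finitely often is null, and similarly for the backward orbit). Thus $X_2$ is a carrier by Proposition \ref{carrier}, and $A_0:=A_1\cap X_2$ is a $G_\delta$ subset of $A_1$ of full measure, hence nearly full in $A_1$.

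The third step is to verify continuity on $A_0$. Every $x\in A_0$ lies in $X_2$, so its forward orbit hits $A_1$, giving $r_{A_1}(x)<\infty$; hence $A_0=\bigsqcup_{n\ge 1}\{r_{A_1}=n\}\cap X_2$. By step one, each piece is clopen in $A_0$, so $r_{A_1}^{-1}(\{n\})$ is open in $A_0$ for every $n$. Since $\mathbb{N}$ carries the discrete topology, this is exactly continuity of $r_{A_1}\colon A_0\to\mathbb{N}$ in the relative topology of $A_0$. Therefore $r_{A_1}$ is nearly continuous.

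The main obstacle is the one addressed in the second step: showing that one can choose the domain of continuity to be $G_\delta$ and not merely $F_\sigma$. The trick is that the $\limsup$ construction is automatically $G_\delta$ and $T$-invariant, and ergodicity is precisely what upgrades Poincar\'e recurrence from ``almost every $x$ returns at least once'' to ``almost every $x$ returns infinitely often,'' which is what makes the $G_\delta$ set have full measure.
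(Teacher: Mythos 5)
Your proof is correct and uses the same core idea as the paper's: the level sets $\{x : r_{A_1}(x)=n\}$ are clopen in $X_1$, so the return-time function is locally constant on a clopen decomposition of its domain and hence nearly continuous. The only difference is your extra care in making the domain a $T$-invariant $G_\delta$ recurrence set; note that the ``main obstacle'' you flag is not actually one, since $\bigcup_{n}\{r_{A_1}=n\}$ is a countable union of sets open in $A_1$, hence open, and open subsets of a Polish space are automatically $G_\delta$, so the naive full-measure domain already suffices.
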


\begin{proof}
Let $A_1$ and $X_1$ be as above.  Decompose $A_1$ into the sets with the same first return times:  
\[ \begin{array}{rcl} 
B_1 & = & (T|_{X_1})^{-1}(A_1) \cap A_1 \\ 
B_2 & = & \left( (T|_{X_1})^{-2}(A_1) \cap A_1 \right) \setminus B_1, \\ 
{} &  \ldots & {} \\  
B_n & = & \left( (T|_{X_1})^{-n}(A_1) \cap A_1 \right) \setminus \bigcup_{i=1}^{n-1}B_i.
\end{array}\]
  Each $B_i$ is clopen in $X_1$, and $r_{A_1}(x)= i$ for $x \in B_i$ so that $r_{A_1}$ is constant on each clopen set of the decomposition, and therefore nearly continuous.

\end{proof}

For a nearly clopen set $A$, the induced map, defined on a subset $A_1$ of $A$, is given by $T_{A_1} = T^{r_{A_1}}$.  Note that $A_1$ is a $G_\delta$ subset of $X$ so that $(A_1, \tau|_{A_1}, \mu|_{A_1})$ is a Polish probability space.

\begin{lemma}   $T_{A_1}$ is a homeomorphism in the relative topology of $A_1$
\end{lemma}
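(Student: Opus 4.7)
The plan is to leverage the clopen decomposition of $A_1$ constructed in the proof of Lemma \ref{returntime}. Recall that $A_1 = \bigsqcup_{i \geq 1} B_i$, where $B_i = \{x \in A_1 : r_{A_1}(x) = i\}$ is clopen in $X_1$, and $T_{A_1}|_{B_i} = T^i|_{B_i}$. The two things to verify are that $T_{A_1}$ is a continuous bijection of $A_1$ onto itself and that its inverse is continuous.

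First I would trim $X_1$ down to a carrier on which every point of $A_1$ has both a finite first forward return and a finite first backward return. By Poincar\'e recurrence (which applies since $T$ preserves $\mu$ and is ergodic on $X_1$), the set of $x \in A_1$ whose forward orbit never revisits $A_1$ has measure zero, as does the corresponding set for $T^{-1}$; their union together with its $T$-orbit is an $F_\sigma$ set of measure zero, so its complement $X_2$ is a carrier of $T$ by Proposition \ref{carrier}. Replacing $A_1$ by $A_1 \cap X_2$ (still in the equivalence class $[A]$), the partition $\{B_i\}$ genuinely exhausts $A_1$, and the images $C_i = T^i(B_i)$ are precisely the points of $A_1$ whose first backward return to $A_1$ is at time $i$, so $\{C_i\}$ partitions $A_1$ as well.

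For continuity, on each clopen piece $B_i$ the map $T_{A_1}$ agrees with $T^i$, which is continuous on $B_i$ because $T$ is a homeomorphism of $X_2$. Since the $B_i$ form a clopen partition of $A_1$ in its relative topology and $T_{A_1}$ is continuous on each piece, $T_{A_1}$ is continuous on all of $A_1$. The same argument applied to $\{C_i\}$ shows that $T_{A_1}^{-1}$, whose restriction to $C_i$ equals $T^{-i}|_{C_i}$, is continuous on $A_1$. Bijectivity of $T_{A_1}:A_1\to A_1$ is immediate from the definition of $B_i$ and $C_i$ once we know both first-return functions are defined everywhere.

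The main obstacle is the bookkeeping in the first step: we need the decomposition of $A_1$ into the clopen pieces $B_i$ and the clopen pieces $C_i$ to actually cover $A_1$ in the relative topology of the carrier we end up working on, since a map that is piecewise continuous on a clopen cover of only a full-measure subset of $A_1$ is not yet continuous in the sense we require. Once the carrier is chosen so that both partitions exhaust $A_1$, the clopen-partition-plus-piecewise-continuous argument closes the proof.
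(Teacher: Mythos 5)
Your proof is correct and follows essentially the same route as the paper's: decompose $A_1$ into the clopen sets $B_i$ of constant return time, on which $T_{A_1}=T^i$ is a piecewise homeomorphism. The paper's version is terser---it does not explicitly trim the carrier so the $B_i$ exhaust $A_1$, nor verify inverse continuity via the backward-return partition $\{C_i\}$---so your write-up simply supplies details the paper leaves implicit.
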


\begin{proof}
Use the decomposition in the proof of Lemma \ref{returntime} of $A_1$ into $B_i$.  For $x \in B_i$, $r_{A_1}(x) = i$ so that $T_{A_1} = T^i$ on $B_i$ , and the induced map is a piecewise function of homeomorphisms on a clopen decomposition of $X_1$.  
\end{proof}

Note that the set $X_1$ is not unique.  We refer to $T_A$ as a \textbf{nearly continuous induced map} because there exists a carrier $X_1$ such that $A \cap X_1 = A_1$ is clopen in $X_1$ and $T_{A_1}$ is a homeomorphism on $A_1$.  
\begin{defi}
For $(X,\tau,\mu,T)$, a n.c. dynamical system, and the nearly clopen set $A \subset X$, the \textbf{skyscraper} over $A$ is the ordered list of sets \[ \begin{array} {cccccc} A,& \ T(A) \backslash A, & \, T^2 (A) \backslash (A \cup T(A)),& \ldots , & T^k(A) \backslash \bigcup_{i=0}^{k-1} T^i(A), & \ldots \end{array} \] referred to as levels of the skyscraper, and $A$ is referred to as the base. 
\end{defi}
The levels are commonly visualized as intervals, with $A$ on the bottom, $T(A) \backslash A$ above, etc.  The transformation $T$ moves points in a level to the level above.  If the image of a point is not in the next level, then the image appears in the base.  For an $x \in A$, $T$ moves the point $x$ up the skyscraper to height $r_A(x)$, then to the point $T_A(x)$.  

\begin{defi}
Let $(X,\tau,\mu,T)$ be a n.c. dynamical system and $B$ be a nearly clopen set.  A \textbf{nearly clopen tower} of height $n$ over $B$ is the set of $n - 1$ images of $B$ under $T$: \[ B, T(B), T^2(B), \ldots, T^{n-1}(B) \] such that $\mu(T^i(B) \cap T^j(B)) = 0$ for $0 \leq i < j \leq n-1$.  The set $B$ is called the \textbf{base} of the tower.
\end{defi}

\begin{defi}
A \textbf{column} $\mathcal C$ of a skyscraper over a nearly clopen set $A$ is an ordered list of sets \[ B, T(B), T^2(B), \ldots, T^{r-1}(B) \] where $r_A(x) = r$ for all $x \in B$.  $B \subset A$ is the base of the column.
\end{defi}

As the return time function is nearly continuous, any skyscraper may be decomposed into columns with constant return times for each base such that the levels are nearly clopen.
We now turn our attention to the various notions of equivalence between two nearly continuous dynamical systems.

\subsection{Nearly Continuous Equivalences}
\begin{defi}
Two systems $(X,\tau,\mu,T)$ and $(Y,\tau,\nu,S)$ are said to be \textbf{nearly continuously conjugate} if there exists carriers $X_0 \subset X$ and $Y_0 \subset Y$ and a homeomorphism $\phi: X_0 \rightarrow Y_0$ such that $\phi \circ T|_{X_0} = S|_{Y_0} \circ \phi$.
\end{defi}

\vspace{.2in}
\begin{defi}
For a transformation $T: X \rightarrow X$, the $\textit{orbit}$ of a point $x \in X$ is \[ Orb_T(x) = \{ T^i(x) : i \in \mathbb{Z} \}. \]
\end{defi}

\begin{defi}
Fix a $T \in \mathcal{G}(X)$ and select $S \in \mathcal{G}(X)$ such that $Orb_S(x) \subseteq Orb_T(x)$.  The \textbf{cocycle} of $S$ is the corresponding $\mathbb{Z}$-valued map $C_S: X \rightarrow \mathbb{Z}$ such that $S(x) = T^{C_S(x)}(x)$.
\end{defi}

\begin{defi}
For a n.c. dynamical system $(X,\tau,\mu, T)$, the \textbf{nearly continuous full group} $[[ T ]] \leqslant \mathcal{G}(X)$ is the set of ergodic measure preserving near homeomorphisms $T'$ such that there exists a common carrier $X_0 \subseteq X$ for $T$ and $T'$ with $Orb_{T'}(x) =  Orb_T(x)$ for $x \in X_0$ where the cocycle $C_{T'}$ is nearly continuous.
\end{defi}

\begin{defi} Two nearly continuous dynamical systems $(X,\tau,\mu,T)$ and $(Y,\tau, \nu, S)$ are \textbf{nearly continuously orbit equivalent} if there exists $T' \in [[T]]$ and $S' \in [[S]]$ such that $T'$ is nearly continuously conjugate to $S'$.
\end{defi}

Thus, if the two systems are n.c. orbit equivalent, there exist carriers $X_0 \subseteq X$ and $Y_0 \subseteq Y$, a homeomorphism $\phi: X_0 \rightarrow Y_0$, and continuous maps $p: X_0 \rightarrow \mathbb{Z}$ and $q: Y_0 \rightarrow \mathbb{Z}$ such that $\phi \circ T^p = S^q \circ \phi$ and $\phi (Orb_T(x)) = Orb_S(\phi(x))$ for $x \in X_0$.

% Suppose $(X,\tau,\mu,T)$ is n.c. orbit equivalent to $(Y,\tau,\nu,S)$.  Then, there exists $T' \in [[T]]$ and $S' \in [[S]]$ such that $T'$ is n.c. conjugate to $S'$.  Then $[T'] \subset \left[ \left( [[T]] \right) \right]$ is n.c. conjugate to $[S'] \subset \left[ \left( [[T]] \right) \right]$ $\Rightarrow$ $[T'] \subset [[ \left( [T] \right) ]]$ is n.c. conjugate to $[S'] \subset [[ \left( [S] \right) ]]$.

% Also, if $\hat T \in [T]$, then $[\hat T] = [T]$
% \[ \Rightarrow [T'] \subseteq [[ \left( [\hat T] \right) ]] is n.c. conjugate to [S'] \subseteq [[ \left( [\hat S] \right) ]]  \]
% \[ \Rightarrow [T'] \subseteq [ \left( [[\hat T]] \right) ] is n.c. conjugate to [S'] \subseteq [ \left( [[\hat S]] \right) ],  \]
% $\Rightarrow$ there exists $\hat T' \in [T'] \subset [[\hat T]]$ and $\hat S' \in [S'] \subset [[\hat S]]$ such that $\hat T'$ and $\hat S'$ are n.c. conjugate.
%\end{proof}

In the nearly continuous category, A. del Junco and A. \c Sahin proved in \cite{dJS} that Dye's theorem holds:

\begin{theorem}(J.\c S.) Suppose $(X, \tau, \mu, T)$ and $(Y,\tau,\nu, S)$ are nearly continuous dynamical systems.  Then, the systems are nearly continuously orbit equivalent.
\end{theorem}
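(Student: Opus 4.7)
The plan is to adapt the classical Dye orbit-equivalence theorem to the nearly continuous category by replacing measurable Rokhlin towers with nearly clopen ones, and then running the usual back-and-forth matching argument with enough topological control to keep the cocycles nearly continuous at every stage.

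First I would establish a nearly continuous Rokhlin lemma: given any $T\in\mathcal{G}(X)$, any $n\in\Nat$, and any $\eps>0$, there exists a nearly clopen set $B\subset X$ whose first $n$ images $B,T(B),\ldots,T^{n-1}(B)$ under $T$ are pairwise disjoint, nearly clopen, and cover a set of measure at least $1-\eps$. This should follow by approximating a measurable Rokhlin base by a nearly clopen set (using the abundance of nearly clopen sets cited from \cite{JRW}) and then intersecting with a carrier so that the iterates remain nearly clopen. Combined with Lemma \ref{returntime}, this gives us access to nearly clopen tower decompositions into columns with nearly continuous return times.

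Next I would carry out a back-and-forth construction. At stage $k$ choose tower heights $n_k\gg n_{k-1}$, and build nearly clopen Rokhlin towers for $T$ and for $S$ with the same measure distribution of column heights (up to error $2^{-k}$); pair up columns bijectively so that the pairing refines the one from stage $k-1$ on a large nearly clopen "frozen" subset $G_k$. On each paired column, declare $\phi_k$ to be the obvious level-by-level near homeomorphism between the clopen levels. Each $\phi_k$ is then a near homeomorphism between carriers $X_k\subseteq X$ and $Y_k\subseteq Y$, and the associated cocycles $p_k: X_k\to\Zee$ and $q_k: Y_k\to\Zee$ are constant on each clopen level, hence nearly continuous by the argument of Lemma \ref{returntime}.

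The main obstacle — and the heart of the proof — is ensuring that the sequence $\phi_k$ stabilizes to a near homeomorphism with nearly continuous cocycle, rather than just converging in measure. The remedy is the frozen set $G_k$: arrange the stage-$k$ refinement so that $\phi_j|_{G_k}=\phi_k|_{G_k}$ for all $j\ge k$ and so that $\mu(G_k)>1-2^{-k}$. Then $X_0:=\bigcap_m\bigcup_{k\ge m}G_k$, intersected with a common carrier of all the $\phi_k^{-1}\circ T\circ\phi_k$'s, is nearly full; on $X_0$ the pointwise limit $\phi=\lim_k\phi_k$ agrees with some $\phi_k$ on each $G_k$, and so is a near homeomorphism with nearly continuous cocycles $p,q$ inherited from the stage at which each point froze. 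Setting $T'=\phi^{-1}\circ S\circ\phi\in[[T]]$ and $S'=\phi\circ T\circ\phi^{-1}\in[[S]]$, one obtains the desired nearly continuous conjugacy between elements of the respective full groups, proving nearly continuous orbit equivalence.
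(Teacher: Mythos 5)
You should first be aware that the paper does not prove this statement at all: it is quoted verbatim as an external result of del Junco and \c Sahin, with the reader referred to \cite{dJS}. So there is no internal proof to compare against; your proposal has to be judged as a sketch of the del Junco--\c Sahin argument itself. Your outline does follow the broad strategy of that proof (nearly clopen Kakutani--Rokhlin towers, a back-and-forth pairing of columns, and stabilization on an exhausting sequence of nearly clopen ``frozen'' sets so that the limit is a near homeomorphism rather than merely an a.e.\ limit). Identifying the frozen-set mechanism as the point where the nearly continuous category differs from the measurable one is the right instinct.

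That said, as a proof the sketch has real gaps. First, your nearly continuous Rokhlin lemma is not obtained the way you describe: approximating a measurable Rokhlin base $B$ by a nearly clopen set $B'$ does not preserve disjointness of $B', T(B'), \ldots, T^{n-1}(B')$, and intersecting with a carrier does not repair this. The standard route in this category is instead to induce on a nearly clopen set of small measure and use the column decomposition coming from the nearly continuous return-time function, which you gesture at but do not actually use for the construction. Second, the genuinely hard step --- producing, for two \emph{unrelated} systems $T$ and $S$, nearly clopen tower decompositions whose column-height distributions match exactly enough to pair columns level by level, while refining the previous stage on a set of measure $1-2^{-k}$ --- is asserted in one sentence; this copying/filling argument is the technical heart of \cite{dJS} and cannot be waved through. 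Third, and most importantly, nothing in your limit argument shows that $\phi$ carries $T$-orbits \emph{onto} $S$-orbits, which is what is needed for $T' = \phi^{-1}\circ S\circ\phi$ to lie in $[[T]]$ with a nearly continuous cocycle: the towers must be arranged to exhaust the orbit relation (heights tending to infinity and the tower partitions generating), and the cocycle of the limit map must be shown to stabilize along with $\phi$ itself on each frozen set. Without these three ingredients the construction produces at best a measure isomorphism, not a nearly continuous orbit equivalence.
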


\begin{defi} Two nearly continuous dynamical systems $(X, \tau,\mu, T)$ and $(Y, \tau, \nu, S)$ are \textbf{nearly continuously Kakutani equivalent} if there exist nearly clopen sets $A \subseteq X$ and $B \subseteq Y$ with $\mu(A) >0$ and $ \nu(B) > 0$ such that $T_A$ and $S_B$ are nearly continuously conjugate as induced maps.

If $\mu(A) = \nu(B)$, then the systems are \textbf{ nearly continuously even Kakutani equivalent }.
\end{defi}

This concludes the introduction of spaces, maps, sets, and notions of equivalence employed throughout the paper.

\section{Proof of Theorem \ref{NCEKE}}

As $(X, \tau,\mu, T)$ and $(Y, \tau, \nu, S)$ are n.c. even Kakutani equivalent, there exists $X_0 \subseteq X$ and $Y_0 \subseteq Y$, carriers of $T$ and $S$, respectively, such that $A_0 = A \cap X_0$ and $B_0 = B \cap Y_0$ are clopen in $X_0$ and $Y_0$, respectively, such that $\phi: A_0 \rightarrow B_0$ is a homeomorphism and $\phi \circ T_{A_0}= S_{B_0} \circ \phi$.  Our goal is to extend $\phi$ to a carrier of $T$ in such a way so as to establish an orbit equivalence, $\hat \phi$, while (nearly) preserving the conjugacy between the induced maps.  We do so by defining a point map constructed piecewise on a nearly clopen decomposition of $X_0$ (based on return times to $A_0$ and $B_0$), where $\hat \phi$ is a composition of the homeomorphisms $T$, $T_{A_0}$, $\phi$, and $S$ on each piece.  To help make sense of the definition of the extension, we begin with a description of the machinery used to create the map.

\subsection{Describing Piles and Pits}

Construct the nearly clopen skyscraper of $T$ over the set $A_0$.  The height of the tower over a point $x \in A_0$ is 
\[r_{A_0}(x) := \min\{n > 0 : T^n(x) \in A_0\}.\]  For any $x \in X_0$, define \[h(x) = \min \{ h >0 : T^{-h}(x) \in A_0 \}.\]  Let $\tilde x = T^{-h(x)}(x)$.  The fiber of the tower containing the point $x$ is the list of points \[ \left\{ \begin{array}{c} T^{r_{A_0}(\tilde x)}(\tilde x) \\ \vdots \\ x \\ \vdots \\ T(\tilde x) \\ \tilde x \end{array} \right\} .\]  We refer to these fibers as piles.  To the right of the pile containing $x$, list the fiber above $T_{A_0} \circ T_A(\tilde x)$.  Continue listing the piles so that to the right of each base point $\tilde x \in A_0$, one sees the fiber above $T_{A_0}( \tilde x)$, and, to the left of $\tilde x \in A_0$ one sees the fiber above $T_{A_0}^{-1}( \tilde x)$.  This ordered list of fibers gives what we call the upper frame for the point $x$, simply slices of the skyscraper as we follow the orbit of $x$.

For each point in $A_0$, there exists a point in $B_0$ via the conjugacy $\phi$.  For each of these points, we extract a fiber of the Kakutani skyscraper built via $S$ over the set $B_0$.  We denote by $r_{B_0}(\tilde y)$ the height of the tower over a point $\tilde y \in B_0$, \[r_{B_0}(\tilde y) := \min\{n > 0 : S^n(\tilde y) \in B_0\}. \]  Simply arrange the fibers from the skyscraper over $B_0$ in the same manner as for the fibers above points in $A_0$, with the fiber for $S_{B_0}(\tilde y)$ listed immediately to the right of the fiber for $\tilde y \in B_0$, and the fiber for $S_{B_0}^{-1}(\tilde y)$ listed immediately to the left of the fiber for $\tilde y \in B_0$.  Now, ``flip" the fibers upside-down over the points in $B_0$ so the point at the top of each list is in $B_0$.  We refer to these fibers as pits.

For the last step in this visualization, line up the base of the piles with the top of the pits via the conjugacy.  One traverses the diagram in the following manner:  applying the induced map $T_{A_0}$ to the bottom of the upper frame shifts the points to the right along the base, and applying the induced map $S_{B_0}$ to the top of the lower frame shifts the points to the right across the top.  Apply $T$ to move up the piles and apply $S$ to move down the pits.  Cross between the upper frame and the lower frame by applying $\phi$ or $\phi^{-1}$.

\begin{figure}[!h]
	\begin{center}
		\includegraphics[height= 5cm]{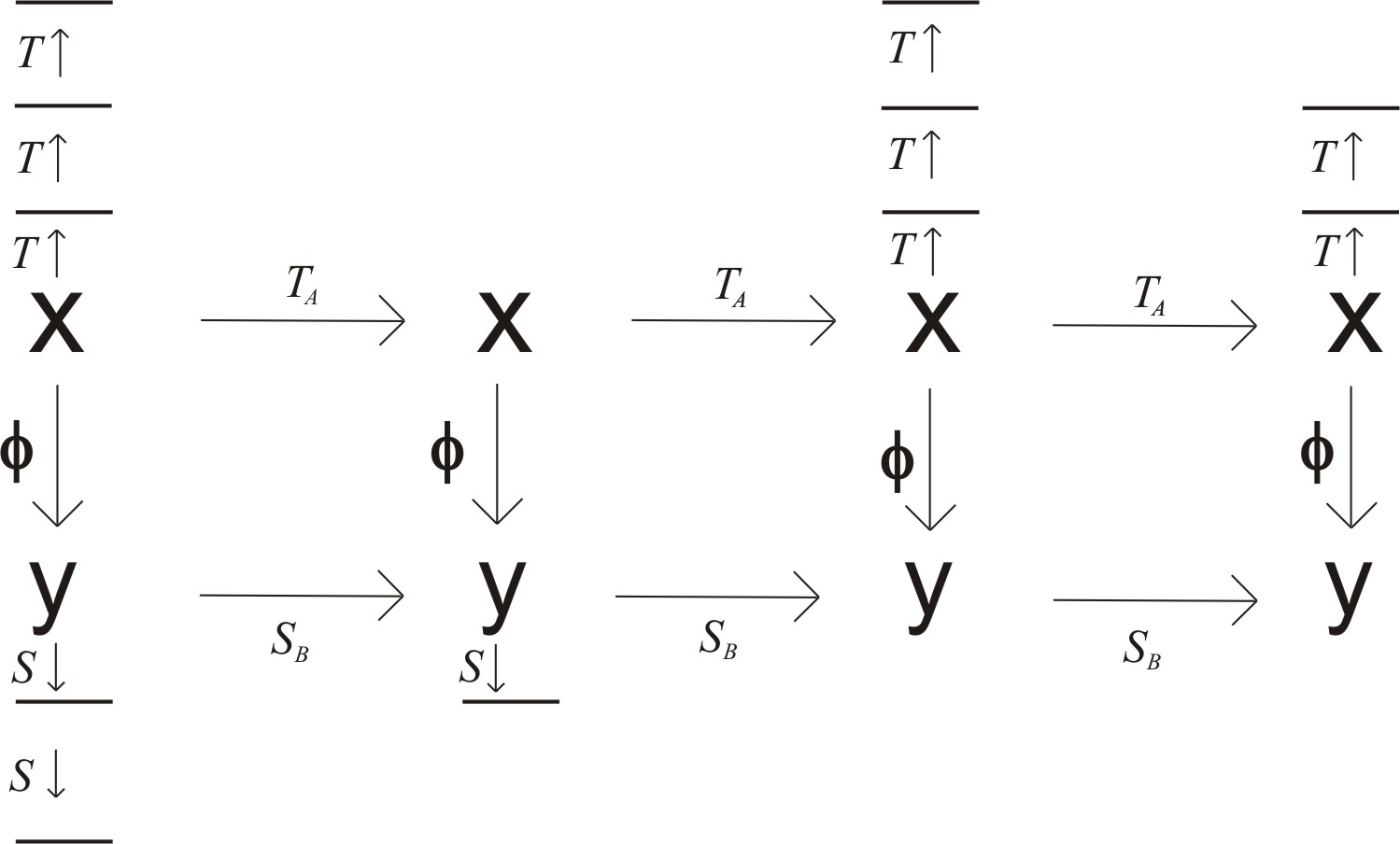}
	\end{center}
\end{figure}

We create an orbit equivalence by developing a method for assigning points from the piles to points in the pits.  The technique we describe is much like that of a machine in a factory.  This machine deposits items from the piles into the pits directly below until either the pile is empty (to all points in the pile, a map has been assigned) or the pit is full.  Then, it merely shifts the upper frame one to the right and deposits as much of the remainder of the piles into the remainder of the pits, shifts again and deposits again, etc.  
Here, we merely shift the upper frame across the lower by utilizing the induced map on the base of the frames.

A point $x \in A_0^C$ maps to a point $y \in B_0^C$ if, after a shift, the pit below $x$ has enough as of yet unused space to receive the point.  A point in a pile cannot be mapped to a pit if after a shift, the pit below it is full.

Define 
\[ \theta_k^n (x) = \sum_{i=k}^n r_{A_0}(T_{A_0}^i \circ T^{-h(x)}(x)) \] the sum of the heights of the $k^{th}$ pile through the $n^{th}$ pile to the right of the pile containing $x$, and

\[ \psi_k^n (x) = \sum_{i = k}^n r_{B_0}( S_{B_0}^i \circ \phi \circ T^{-h(x)}(x)), \] the sum of the depths of the $k^{th}$ pit through the $n^{th}$ pit to the right of the pit below the pile containing $x$.

 Let $n(x) = 0$ if $h(x) \leq r_{B_0} (\phi \circ T^{-h(x)}(x))$.  Otherwise, let \[ n(x) = \min \{n > 0: h(x) + \theta_1^n (x) \leq \psi_0^n (x) \}.\]  In essence, this is the smallest number of shifts required for there to be space available in a pit for the point $x$ in a pile.

Let $d(x)$ denote the depth in the pit to which $x$ is mapped.  We find $d(x)$ by calculating how much of the receiving pit has already been filled by other points, which comes from looking at the difference between stack heights and pit depths:
\[d(x) := h(x) + \theta_1^{n(x)} (x) - \psi_0^{n(x)-1} (x).\]

Define $\hat \phi$ as:

\[  \hat \phi (x) = \left\{ \begin{array}{lcl}   \phi(x) & \mbox{for} & x \in A_0 \\  S^{d(x)} \circ \phi \circ T_A^{n(x)} \circ T^{-h(x)}(x) & \mbox{for } & x \mbox{ otherwise} \end{array} \right. \]
					
In a similar manner, we define $\hat \phi ^{-1}$.
\[D(y) = \min \{ D >0: S^{-D}(y) \in B_0 \} \] is the depth below the stack at which the point $y$ sits.
Let \[ \gamma_k^n(y) = \sum_{i = k}^n r_{B_0}(S^{-i}_{B_0} \circ S^{-D(y)}(y) ), \] the sum of the depths of the $k^{th}$ through the $n^{th}$ pits to the left of the pit holding $y$, and let \[ \Psi_k^n(y) = \sum_{i = k}^n r_{A_0}(T_{A_0}^{-i} \circ \phi^{-1} \circ S^{-D(y)}(y)), \] the sum of the heights of the $k^{th}$ through the $n^{th}$ piles to the left of the pile above the pit containing $y$.  The inverse map pairs a point in a pit with a point in a pile by sliding the lower frame backwards along the upper frame, in reverse factory-machine manner.  The machine deposits as much as possible, shifts, deposits again, etc.  The transformation is defined at the point $y$ when, after a certain number of shifts, the pile about has available space.  Let $m(y)$ be the number of shifts needed for a point $y \in B_0^c$.  If $D(y) \leq r_{A_0} (\phi^{-1} \circ S^{-D(y)}(y)$, then $m(y) = 0$.  Otherwise,
\[ m(y) = \min \{ m > 0: D(y) + \gamma_1^m(y) \leq \Psi_0^m(y). \} \]  The height in the receiving pile for $y$ is then given by \[ H(y) = D(y) + \gamma_1^m(y) - \Psi_0^m(y).\]
\[  \hat \phi^{-1} (y) = \left\{ \begin{array}{lcl}   \phi^{-1}(y) & \mbox{for} & y \in B_0 \\  T^{H(y)} \circ \phi^{-1} \circ S_{B_0}^{-m(y)} \circ S^{-D(y)}(y) & \mbox{for } & y \mbox{ otherwise} \end{array} \right. \]

\begin{lemma} $\hat \phi ^{-1}$ is the inverse of $\hat \phi$.
\end{lemma}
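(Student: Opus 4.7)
The plan is to verify $\hat\phi^{-1}\circ\hat\phi=\mathrm{id}$ and $\hat\phi\circ\hat\phi^{-1}=\mathrm{id}$ by case analysis on whether the argument lies in $A_0$ (respectively $B_0$). The case $x\in A_0$ is immediate, since $\hat\phi(x)=\phi(x)\in B_0$ gives $\hat\phi^{-1}(\hat\phi(x))=\phi^{-1}(\phi(x))=x$ by bijectivity of $\phi$. For the substantive case $x\notin A_0$, I would set $\tilde x=T^{-h(x)}(x)\in A_0$ and write $\tilde y_k=\phi(T_{A_0}^k(\tilde x))=S_{B_0}^k(\phi(\tilde x))$ for the base of the $k$-th pit, so that $y:=\hat\phi(x)=S^{d(x)}(\tilde y_{n(x)})$. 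A first step is to check that $y\notin B_0$, so that the second branch of $\hat\phi^{-1}$ applies at $y$: this amounts to the strict inequalities $0<d(x)<r_{B_0}(\tilde y_{n(x)})$, the lower bound following from the minimality of $n(x)$ and the upper bound from $h(x)<r_{A_0}(\tilde x)$.

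The core of the argument consists of three bookkeeping identities. First, $D(y)=d(x)$ and $S^{-D(y)}(y)=\tilde y_{n(x)}$, which is immediate from the placement of $y$ inside pit $n(x)$ at depth $d(x)$. Second, $m(y)=n(x)$: substituting
\[\gamma_1^m(y)=\sum_{j=n(x)-m}^{n(x)-1}r_{B_0}(\tilde y_j),\qquad \Psi_0^m(y)=\sum_{j=n(x)-m}^{n(x)}r_{A_0}(T_{A_0}^j(\tilde x))\]
into the inequality defining $m(y)$ and invoking the defining identity for $d(x)$, the condition $D(y)+\gamma_1^m(y)\le\Psi_0^m(y)$ reduces, for $0<m<n(x)$, to $h(x)+\theta_1^{n(x)-m-1}(x)\le\psi_0^{n(x)-m-1}(x)$, which fails by the minimality of $n(x)$ (using $\theta_1^0=0$ at $m=n(x)-1$); at $m=n(x)$ the condition collapses to $h(x)\le r_{A_0}(\tilde x)$, which holds by definition of $h(x)$. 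Third, $H(y)=h(x)$: substituting $D(y)=d(x)$ and $m(y)=n(x)$ into the definition of $H$ and telescoping the sums against the defining identity for $d(x)$ cancels all $\theta$ and $\psi$ contributions, leaving exactly $h(x)$.

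Assembling the three identities gives
\[\hat\phi^{-1}(y)=T^{H(y)}\circ\phi^{-1}\circ S_{B_0}^{-n(x)}(\tilde y_{n(x)})=T^{h(x)}(\phi^{-1}(\phi(\tilde x)))=T^{h(x)}(\tilde x)=x.\]
The opposite composition $\hat\phi\circ\hat\phi^{-1}=\mathrm{id}$ follows by the mirror argument, interchanging the roles of $(T,A_0,\phi,h,n,d,\theta,\psi)$ with $(S,B_0,\phi^{-1},D,m,H,\Psi,\gamma)$ and the piles with the pits. I expect the main obstacle to be the second bookkeeping identity: one must track the doubly indexed sums carefully through the substitutions, handle the empty-sum conventions at the boundary values of $m$ with the correct off-by-one accounting, and check that the strict inequalities defining $n(x)$ match up exactly with those defining $m(y)$ so that the two minimization problems are pinned down to the same integer.
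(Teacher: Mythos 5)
Your approach is genuinely different from the paper's: the paper offers only an explicitly heuristic argument (the ``machine rotated $180^{\circ}$'' picture, with $D(\hat\phi(x))=d(x)$ and $H(\hat\phi(x))=h(x)$ asserted rather than derived), whereas you carry out the actual bookkeeping. Your three identities are the right skeleton, and your computation of $m(y)=n(x)$ --- reducing $D(y)+\gamma_1^m(y)\le\Psi_0^m(y)$ to $h(x)+\theta_1^{n(x)-m-1}(x)\le\psi_0^{n(x)-m-1}(x)$ for $0<m<n(x)$, which fails by minimality, and noting its collapse to $h(x)\le r_{A_0}(\tilde x)$ at $m=n(x)$ --- checks out. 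This is what a complete proof of the lemma would have to look like.

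Two points need repair, both traceable to boundary conventions in the construction itself. First, your claimed upper bound $d(x)<r_{B_0}(\tilde y_{n(x)})$ does not follow from $h(x)<r_{A_0}(\tilde x)$. The defining inequality of $n(x)$ gives only $h(x)+\theta_1^{n(x)}(x)\le\psi_0^{n(x)}(x)$, hence $d(x)\le r_{B_0}(\tilde y_{n(x)})$, and equality can occur (e.g.\ $n(x)=0$ with $h(x)=r_{B_0}(\phi(\tilde x))$); in that case $\hat\phi(x)=S_{B_0}(\tilde y_{n(x)})\in B_0$ collides with $\hat\phi(T_{A_0}^{n(x)+1}(\tilde x))$, so the first branch of $\hat\phi^{-1}$ applies at $y$ and returns the wrong point. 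A correct proof must first tighten the definition of $n(x)$ to strict inequalities (the top slot of each pit is already occupied by the $\phi$-image of a base point, so only $r_{B_0}-1$ slots are free); with that adjustment your strict bounds, and hence $y\notin B_0$, do hold. Second, the printed formula $H(y)=D(y)+\gamma_1^m(y)-\Psi_0^m(y)$ does not telescope to $h(x)$: substituting $m=n(x)$ it yields $h(x)-r_{A_0}(\tilde x)<0$. The symmetric analogue of $d(x)=h(x)+\theta_1^{n(x)}(x)-\psi_0^{n(x)-1}(x)$ is $H(y)=D(y)+\gamma_1^{m(y)}(y)-\Psi_0^{m(y)-1}(y)$, and only with that correction does your cancellation leave exactly $h(x)$. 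Neither issue invalidates your strategy, but both must be stated and resolved explicitly rather than passed over, since as written the two justifications you give for them are not valid.
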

We give a heuristic argument.  Let us again imagine the construction as a machine which deposits items from, say, storage bottles to spots in packing bins.  For $\hat \phi$, we assign an item at a specific storage location in a storage bottle to the first available empty space in a packing bin as we shift the storage bottles forward.  For $\hat \phi^{-1}$, we assign an empty space in a packing bin to the first available item in a storage bottle shifting the storage bottles backwards.  The diagram of the $\hat \phi ^{-1}$ construction is just the diagram of the $\hat \phi$ construction rotated 180$^{\circ}$.  Naturally, $D(\hat \phi(x)) = d(x)$, the same number of shifts are required.  Then, it is easily follows that $H(\hat \phi(x)) = h(x)$.  

$\hat \phi$ and $\hat \phi ^{-1}$ need to be well-defined on nearly full sets.  In order for $\hat \phi$ to be well-defined at a point, the values $h(x)$, $n(x)$, and $d(x)$ exist and must be finite.  We may assume that $h(x)$ (and $D(y)$) are finite.  Otherwise, we remove the entire orbit of $x$ from $X_0$ (or $y$ from $Y_0$).  To see that $n(x)$ (and $m(y)$) are finite on a subset of full  measure, we begin with the following lemma:

\begin{lemma}  Suppose $(X,\tau,\mu,T)$ is a nearly continuous dynamical system and $f(x)$ is a nearly continuous function such that $\int f(x) = 0$.  Then, for almost every $x \in X$, there exists $n >0$ such that $f(x) + f(T(x)) + f(T^2(x))+ \cdots + f(T^{n-1}(x)) \leq 0$.
\end{lemma}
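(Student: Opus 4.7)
The plan is to prove the contrapositive: with $S_n(x) = \sum_{i=0}^{n-1} f(T^i x)$, I would show that the set
\[
A = \{x \in X_0 : S_n(x) > 0 \text{ for all } n \geq 1\}
\]
has measure zero. First, $A$ is Borel, since the nearly continuous $f$ is Borel on a carrier, each $S_n$ is Borel, and $A = \bigcap_{n \geq 1}\{S_n > 0\}$.

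Next, I would argue by contradiction. Suppose $\mu(A) > 0$. Because $T$ is ergodic and measure preserving on a probability space, the first return time $r_A(x) = \min\{n \geq 1 : T^n x \in A\}$ is finite for $\mu$-a.e.\ $x \in A$ by Poincar\'e recurrence, and the Abramov/Kac identity applies:
\[
\int_A \sum_{i=0}^{r_A(x)-1} f(T^i x)\, d\mu(x) \;=\; \int_X f\, d\mu \;=\; 0.
\]
This identity is obtained from the disjoint skyscraper decomposition $X = \bigsqcup_{k \geq 1}\bigsqcup_{i=0}^{k-1} T^i(A_k)$ (modulo a null set, using ergodicity), where $A_k = \{x \in A : r_A(x) = k\}$, together with the $T$-invariance of $\mu$.

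But the integrand on the left is precisely $S_{r_A(x)}(x)$, which is strictly positive for every $x \in A$ by the defining property of $A$ (applied to the positive integer $n = r_A(x)$). Hence the left side is strictly positive, contradicting the identity. Therefore $\mu(A) = 0$, which is exactly the stated conclusion.

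The argument is entirely measure theoretic; the near continuity of $f$ is used only to guarantee Borel measurability, and the ergodicity and finite measure built into the definition of a nearly continuous dynamical system supply both the recurrence and the Abramov identity. The only step requiring any real care is the disjoint skyscraper bookkeeping underlying that identity, and this is standard. I do not anticipate a substantive obstacle.
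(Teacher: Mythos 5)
Your proof is correct, and it reaches the contradiction by a different mechanism than the paper. Both arguments start the same way: assume the bad set $A=\{x: S_n(x)>0 \ \forall n\}$ has positive measure and look at the induced sum $S_{r_A(x)}(x)$ over the first return time, which is strictly positive on $A$ by hypothesis. The paper then feeds this into the pointwise ergodic theorem: it applies Birkhoff to the induced function $f_A=S_{r_A}$ under the induced map $T_A$, passes to a subsequence of times at which the orbit revisits $A$, and rewrites the Birkhoff averages of $f$ as a product of two limits to conclude $\int f>0$. You instead invoke the Kac/Abramov identity
\[
\int_A \sum_{i=0}^{r_A(x)-1} f(T^i x)\, d\mu(x)=\int_X f\, d\mu=0,
\]
which follows from the disjoint tower decomposition plus countable additivity and $T$-invariance, and contradicts strict positivity of the integrand immediately. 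Your route is shorter and more robust: it avoids the ergodic theorem entirely (only Poincar\'e recurrence and ergodicity-to-cover-$X$ are used) and in particular sidesteps the somewhat delicate subsequence and product-of-limits manipulations in the paper's version. The paper's route, on the other hand, makes explicit the relationship between ergodic averages of $f$ and of the induced function $f_A$, which is in the spirit of how the lemma is subsequently applied. One small point worth stating explicitly in your write-up: to justify interchanging the sum over the tower levels with the integral for a signed $f$, split $f=f^+-f^-$ and apply the identity to each part; integrability of $f$ makes both sides finite.
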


\begin{proof}

Suppose not, that there exists $E \subseteq X$ with $\mu(E) >0$, such that for all $x \in E$ and all $n > 0$, $\sum_{i=0}^{n-1} T^i(f(x)) >0$.  

Let $x \in E$.  For a fixed value, $n$, calculate how many times the orbit of $x$ has returned to $E$.  Let $m(n) = \sum_{i=0}^{n-1} \chi_E (T^i(x))$.

Define the function $f_E(x) = \sum_{i=0}^{r_E(x) -1} f(T^i(x))$.  By assumption, $f_E$ is a positive function over $E$ and $T_E$ is ergodic $\Rightarrow$
\[ \lim_ {m \rightarrow \infty} \frac{1}{m} \sum_{i =0}^{m-1} f_E(T_E^i(x)) \rightarrow \int_E f_E d\mu _E > 0. \]

Also, \[ \sum_{i=0}^{m(n)-1} f_E(T_E^i(x)) = \sum_{i=0}^{n-1}f(T^i(x)) \] whenever $\chi_E(T^{n}(x)) = 1$.

For a subsequence $n_k$ of $n$ such that $\chi_E(T^{n_k}(x)) = 1$,
\[
\begin{array}{rcl} \lim_ {n \to \infty} \frac{1}{n_k} \sum_{i=0}^{n_k-1}f(T^i(x)) & = & \lim_{n_k \to \infty} \frac{1}{n_k} \sum_{i=0}^{m(n_k)-1}f_E(T_E^i(x)) \\ 
{} & = & \lim _{n_k \to \infty} \frac{1}{m(n_k)} \sum_{i=0}^{n_k-1} \chi_E (T^i(x)) \cdot \frac{1}{n_k} \sum_{i=0}^{m(m_k)-1}f_E(T_E^i(x)) \\ 
{} & = & \lim_ {n \to \infty} \frac{1}{n_k}\sum_{i=0}^{n_k-1} \chi_E (T^i(x)) \cdot \lim _{m \to \infty} \frac{1}{m} \sum_{i=0}^{m-1}f_E(T_E^i(x)) 
\\ {} & = & \mu(E) \int_E f_E d\mu _E \\ {} &>&0 \end{array} 
\]

As $\frac{1}{n} \sum_{i=0}^{n-1} f(T^i(x))$ converges, any subsequence converges to the same value, implying that $\int f >0$, a contradiction.
\end{proof}

\begin{lemma}  The value $n(x)$ is finite on a set of full measure.

\end{lemma}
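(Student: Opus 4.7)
The plan is to translate the defining inequality for $n(x)$ into a statement about Birkhoff sums of a mean-zero nearly continuous function on the induced base $(A_0,T_{A_0})$, apply the preceding lemma, and then bridge the remaining gap via a coboundary argument. The main obstacle will be that the preceding lemma only produces partial sums $\le 0$, whereas the target inequality may require reaching a strictly negative integer whose magnitude depends on the point.

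Setting $\tilde x:=T^{-h(x)}(x)\in A_0$ and using $\phi\circ T_{A_0}=S_{B_0}\circ\phi$ to rewrite $\psi_0^n(x)=\sum_{i=0}^n r_{B_0}(\phi(T_{A_0}^i\tilde x))$, the condition $h(x)+\theta_1^n(x)\le\psi_0^n(x)$ is equivalent to
\[
\sum_{i=1}^n g(T_{A_0}^i\tilde x)\ \le\ r_{B_0}(\phi\tilde x)-h(x),\qquad g:=r_{A_0}-r_{B_0}\circ\phi.
\]
Two applications of Kac's formula, together with the measure-preserving conjugacy $\phi:A_0\to B_0$ and the even hypothesis $\mu(A)=\nu(B)$, give $\int_{A_0}g\,d\mu=0$; Lemma~\ref{returntime} shows $g$ is nearly continuous. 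The preceding lemma applied to $g$ on $(A_0,T_{A_0})$ then yields, for a.e.\ $\tilde x$, $\inf_{m\ge 1}G_m(\tilde x)\le 0$, where $G_m(\tilde x):=\sum_{i=0}^{m-1}g(T_{A_0}^i\tilde x)$.

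To bridge the gap I would introduce
\[
\varphi(\tilde x)\ :=\ -\min_{m\ge 0}G_m(\tilde x)\ \in\ [0,+\infty]
\]
and verify, by a short case split on where the minimum occurs, that $\varphi\circ T_{A_0}=\varphi+g$ at every $\tilde x$ with $\inf_{m\ge 1}G_m\le 0$---hence a.s. By ergodicity of $T_{A_0}$ the set $\{\varphi=+\infty\}$ is invariant, so either $\varphi\equiv+\infty$ a.s., in which case partial sums of $g$ dive to $-\infty$ along a.e.\ orbit and the target inequality is automatic, or $\varphi<\infty$ a.s., giving a measurable coboundary representation $g=\varphi\circ T_{A_0}-\varphi$ with $\varphi\ge 0$. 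In the latter case $\inf_{n\ge 1}\sum_{i=1}^n g(T_{A_0}^i\tilde x)=-\varphi(T_{A_0}\tilde x)=-\varphi(\tilde x)-g(\tilde x)$ (the infimum being achieved, as $G_m$ is integer-valued), and substituting into the target inequality reduces it to $h(x)\le\varphi(\tilde x)+r_{A_0}(\tilde x)$; this holds because $x\notin A_0$ forces $h(x)<r_{A_0}(\tilde x)$ and $\varphi\ge 0$. Thus $n(x)<\infty$ for a.e.\ $x\in X_0$.
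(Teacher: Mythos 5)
Your proposal is correct, and its core is the same as the paper's: apply the preceding lemma to $f=r_{A_0}-r_{B_0}\circ\phi$ on the induced system $(A_0,T_{A_0})$, using Kac's formula and $\mu(A)=\nu(B)$ to get $\int f=0$. The paper's proof stops there, in effect asserting that the existence of some $n$ with $\sum_{i=0}^{n-1}f(T_{A_0}^i\tilde x)\le 0$ already makes $n(x)$ finite. As you observe, that is not quite enough: the defining inequality $h(x)+\theta_1^n(x)\le\psi_0^n(x)$ unwinds to $\sum_{i=1}^n f(T_{A_0}^i\tilde x)\le r_{B_0}(\phi\tilde x)-h(x)$, and when $h(x)>r_{B_0}(\phi\tilde x)$ the right-hand side is strictly negative, so one must show the partial sums reach a point-dependent negative threshold, not merely $0$. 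Your coboundary device $\varphi(\tilde x)=-\min_{m\ge 0}G_m(\tilde x)$, the verification that $\varphi\circ T_{A_0}=\varphi+f$ a.e., the ergodicity dichotomy on $\{\varphi=+\infty\}$, and the final reduction to $h(x)\le\varphi(\tilde x)+r_{A_0}(\tilde x)$ (true since $h(x)<r_{A_0}(\tilde x)$ for $x\notin A_0$ and $\varphi\ge 0$) supply exactly the missing bridge; the integer-valuedness of the return times guarantees the infimum is attained when finite. In short, you follow the paper's route but repair a genuine gap in its one-line argument; the paper's version buys brevity at the cost of rigor, while yours is the complete proof.
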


\begin{proof}  For $\hat \phi(x)$ let $f(x) = r_{A_0}(x) - r_{B_0}(\phi(x))$ as in the previous lemma.  Let $X = A_0$, identified with $B_0$, and $T = T_{A_0} \cong S_{B_0}$.  As $\mu (A) = \nu (B)$, $\int f(x) =0$.  Thus $n(x)$ is finite for almost every $x$.

\end{proof}
The proof that $\hat \phi^{-1}$ is defined on a set of full measure is essentially identical.  
We now define the sets $\hat A$ and $\hat B$ upon which $\hat \phi$ and $\hat \phi^{-1}$ are well-defined.  First, let $A_{n,h,d}$ be the set of points for which $\hat \phi = S^d \circ \phi \circ T_A^n \circ T^{-h}$.  $A_{n,h,d}$ may tediously be written as a finite union of finite intersections of images of $A_0$ and $B_0$ via the homeomorphisms $S, \phi,$ and $T$.  Thus $A_{n,h,d}$ is clopen in $X_0$. Let $B_{m,D,H} = \{ y \in Y_0: \hat \phi^{-1}(y) = T^H \circ \phi^{-1} \circ S_{B_0}^{-m} \circ S^{-D} (y) \}$ so $B_{m,D,H} = \hat \phi (A_{m,H,D})$.  Define \[ \hat A = \bigcup_{n=0}^{\infty} \bigcup_{h=0}^\infty \bigcup_{d = 0}^{\infty} A_{n,h,d} \] and \[ \hat B = \bigcup_{m=0}^{\infty} \bigcup_{D=0}^\infty \bigcup_{H = 0}^{\infty} B_{m,D,H}. \]  
\begin{lemma}
$\hat A$ and $\hat B$ are nearly full sets.
\end{lemma}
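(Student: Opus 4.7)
The plan is to treat the two required properties of a nearly full set separately: full measure and the $G_\delta$ property. For measure, the preceding lemma says $n(x)$ is finite almost everywhere, and $h(x)$ is finite a.e.\ since $T$ is ergodic (so the orbit of a.e.\ $x$ visits $A_0$) and $d(x)$ is defined by a finite arithmetic expression in $n(x)$ and $h(x)$. Hence a.e.\ $x \in X$ lies in $A_{n(x),h(x),d(x)}$, which shows $\mu(\hat A)=1$. The same argument with the dual lemma gives $\nu(\hat B)=1$. So the measure half is immediate from what has already been established.

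For the $G_\delta$ property the key observation is that each $A_{n,h,d}$ is clopen in the carrier $X_0$, not merely nearly clopen. I would justify this by unpacking the defining conditions: $x \in A_{n,h,d}$ is equivalent to the finite system of conditions that $T^{-i}(x) \notin A_0$ for $1 \leq i < h$, $T^{-h}(x) \in A_0$, the next $n$ return times to $A_0$ along the $T$-orbit of $T^{-h}(x)$ take certain values $r_1,\dots,r_n$, the corresponding return times to $B_0$ along the $S$-orbit of $\phi(T^{-h}(x))$ take certain values $s_0,\dots,s_{n-1}$, and the arithmetic inequalities defining $n(x)=n$ and the equation $d(x)=d$ hold. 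Because $A_0$ is clopen in $X_0$, $B_0$ is clopen in $Y_0$, and $T$, $S$, $\phi$ are homeomorphisms on the appropriate carriers, each of these conditions picks out a clopen subset of $X_0$, and the finite Boolean combination is again clopen in $X_0$.

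Once clopenness in $X_0$ is secured, each $A_{n,h,d}$ is open in $X_0$, so there is an open set $U_{n,h,d} \subseteq X$ with $A_{n,h,d}=X_0 \cap U_{n,h,d}$. Setting $V = \bigcup_{n,h,d} U_{n,h,d}$, which is open in $X$ as a countable union of open sets, we get $\hat A = X_0 \cap V$. Since $X_0$ is $G_\delta$ in $X$ and $V$ is open (hence $G_\delta$), the intersection $\hat A$ is $G_\delta$ in $X$. Combined with the full measure conclusion, $\hat A$ is nearly full. The argument for $\hat B$ is symmetric, using the analogous decomposition into clopen pieces $B_{m,D,H}$ in a common carrier $Y_0$ of $S$.

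The main obstacle is the bookkeeping to verify that each $A_{n,h,d}$ really is clopen in $X_0$. The finite-Boolean-combination description is conceptually clear but notationally dense, especially because the $n$ shifts introduce dependence on several orbit segments simultaneously; one must be careful to choose a single carrier on which $T$, $T_{A_0}$, $\phi$, $S$, and $S_{B_0}$ are all simultaneously defined as homeomorphisms (use Proposition~\ref{carrier} together with the analogous fact for $S$, and refine by the preimage of $Y_0$ under $\phi$) so that the finite composition defining membership in $A_{n,h,d}$ is continuous on that carrier. Once this common carrier is fixed, the rest is routine.
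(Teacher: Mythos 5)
Your proposal is correct and follows essentially the same route as the paper: the measure statement comes from the preceding lemma on the finiteness of $n(x)$ (and $h(x)$), and the topological statement comes from writing $\hat A$ as a countable union of the sets $A_{n,h,d}$, each clopen in the carrier $X_0$, so that $\hat A$ is open in $X_0$ and hence $G_\delta$ in $X$. The extra care you take in unpacking why each $A_{n,h,d}$ is clopen and in passing from openness in $X_0$ to the $G_\delta$ property in $X$ only fills in details the paper leaves implicit.
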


\begin{proof}
$\hat A$ and $\hat B$ are both open in $X_0$ as they are the countable union of clopen sets in $X_0$, and of full measure by the previous lemma.
\end{proof}

\begin{lemma}

$\hat \phi$ is continuous on $\hat A$ and $\hat \phi^{-1}$ is continuous on $\hat B$.

\end{lemma}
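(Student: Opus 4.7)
The strategy is to exploit that $\hat A$ is the countable disjoint union of the clopen sets $A_{n,h,d}$, and that on each such set $\hat\phi$ is given by an explicit finite composition of n.c.\ homeomorphisms. Since continuity is a local property, continuity of $\hat\phi$ on each open piece of an open partition of $\hat A$ yields continuity of $\hat \phi$ on all of $\hat A$.

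First I would verify that $\{A_{n,h,d}\}$ is an open partition of $\hat A$. The quantities $h(x)$, $n(x)$, and $d(x)$ are uniquely determined by $x$, so the sets $A_{n,h,d}$ are pairwise disjoint; and, as already noted in the excerpt, each $A_{n,h,d}$ is clopen in $X_0$, being a finite union of finite intersections of $T$-, $S$-, and $\phi$-images of $A_0$ and $B_0$. Thus each $A_{n,h,d}$ is open in the relative topology of $\hat A$, and their union exhausts $\hat A$.

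Second I would verify that $\hat \phi|_{A_{n,h,d}}$ is continuous. On this set the definition of $\hat \phi$ gives $\hat\phi = S^{d} \circ \phi \circ T_{A_0}^{n} \circ T^{-h}$, and each factor is a homeomorphism on an appropriate carrier: integer powers of $T$ and $S$ are homeomorphisms by the definition of a n.c.\ dynamical system; $T_{A_0}$ is a homeomorphism on (a carrier inside) $A_0$ by the preceding induced-map lemma; and $\phi$ is a homeomorphism from $A_0$ to $B_0$ by hypothesis. One must check in addition that $T^{-h}$ sends $A_{n,h,d}$ into the domain of $T_{A_0}^{n}$, and that the next $n$ applications of $T_{A_0}$ land in the domain of $\phi$, but this is forced by the very definitions of $h(x)$ and $n(x)$. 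Stitching together the continuous restrictions on an open partition then gives $\hat \phi$ continuous on $\hat A$.

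The argument for $\hat \phi^{-1}$ on $\hat B$ is symmetric, using the partition $\hat B = \bigcup B_{m,D,H}$ and the formula $\hat \phi^{-1} = T^{H} \circ \phi^{-1} \circ S_{B_0}^{-m} \circ S^{-D}$. The main obstacle — more tedious than deep — is to unwind the definitions of $h$, $n$, $d$ (respectively $D$, $m$, $H$) far enough to write $A_{n,h,d}$ (respectively $B_{m,D,H}$) explicitly as a finite Boolean combination of $T$- and $S$-translates of $A_0$ and $B_0$; both the clopen-ness of each piece and the compatibility of domains and codomains in the defining composition then fall out at once.
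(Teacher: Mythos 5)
Your proposal is correct and follows essentially the same route as the paper: decompose $\hat A$ into the clopen sets $A_{n,h,d}$, observe that on each piece $\hat\phi = S^{d}\circ\phi\circ T_{A_0}^{n}\circ T^{-h}$ is a composition of continuous maps, and argue symmetrically for $\hat\phi^{-1}$ on the pieces $B_{m,D,H}$. The extra checks you flag (disjointness of the pieces and compatibility of domains) are worthwhile details the paper leaves implicit, but they do not change the argument.
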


\begin{proof}  Decompose $\hat A$ into the countable union of clopen sets $A_{h,n,d}$.  On each of these clopen sets, $\hat \phi$ is the composition $S^d \circ \phi \circ T_{A_0}^n \circ T^{-h}$ of functions continuous in the relative topology of $X_0$.  Decomposing $\hat B$ in a similar manner shows that $\hat \phi ^{-1}$ is also continuous in the relative topology of $\hat B$.
\end{proof}

\begin{lemma}

$\hat \phi$ is measure preserving on $\hat A$ and $\hat \phi ^{-1}$ is measure preserving on $\hat B$.

\end{lemma}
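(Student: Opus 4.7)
My plan is to exploit the explicit piecewise formula for $\hat\phi$ on the clopen decomposition of $\hat A$. By construction $\hat A$ is the disjoint countable union of the clopen sets $A_{n,h,d}$ (disjoint because on each point the values $h(x), n(x), d(x)$ are uniquely determined) and $\hat B$ is the disjoint union of the corresponding images $B_{n,h,d}=\hat\phi(A_{n,h,d})$; the disjointness on the $\hat B$-side is guaranteed by the previous lemma identifying $\hat\phi^{-1}$ as the inverse of $\hat\phi$. On each piece $A_{n,h,d}$ the map $\hat\phi$ equals the composition $S^{d}\circ\phi\circ T_{A_0}^{n}\circ T^{-h}$. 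So the task reduces to checking that each factor is measure preserving on the appropriate set and then invoking countable additivity.

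The three factors $T^{-h}$ on $X_0$, $T_{A_0}^{n}$ on $A_0$, and $S^{d}$ on $Y_0$ preserve $\mu$ and $\nu$ respectively, because $T$ and $S$ are measure preserving homeomorphisms on their carriers and because $T_{A_0}=T^{r_{A_0}}$ on $A_0$ is built piecewise from powers of $T$. The one less obvious point, which I view as the heart of the lemma, is that the conjugating homeomorphism $\phi:A_0\to B_0$ is itself measure preserving. This is not built into the definition of nearly continuous conjugacy, but it follows from ergodicity together with the even hypothesis $\mu(A)=\nu(B)$: the push-forward $\phi_{*}(\mu|_{A_0})$ is an $S_{B_0}$-invariant finite Borel measure on $B_0$ with total mass $\mu(A_0)=\nu(B_0)$, and by ergodicity of $S_{B_0}$ with respect to $\nu|_{B_0}$ this forces $\phi_{*}(\mu|_{A_0})=\nu|_{B_0}$. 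Note that $T^{-h}$ sends $A_{n,h,d}$ into $A_0$ by definition of $h$, then $T_{A_0}^{n}$ stays in $A_0$, then $\phi$ lands in $B_0$, and finally $S^{d}$ lifts into $B_{n,h,d}$, so the composition is a well-defined measure preserving bijection from $A_{n,h,d}$ onto $B_{n,h,d}$.

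To conclude, for any Borel $E\subseteq\hat B$ I write $E=\bigsqcup_{n,h,d}(E\cap B_{n,h,d})$ and compute
\[
\mu\bigl(\hat\phi^{-1}(E)\bigr)=\sum_{n,h,d}\mu\bigl(\hat\phi^{-1}(E\cap B_{n,h,d})\bigr)=\sum_{n,h,d}\nu(E\cap B_{n,h,d})=\nu(E),
\]
using piecewise measure preservation on each clopen cell. The statement for $\hat\phi^{-1}$ on $\hat B$ follows by the identical argument applied to the decomposition $\hat B=\bigsqcup B_{m,D,H}$ on which $\hat\phi^{-1}=T^{H}\circ\phi^{-1}\circ S_{B_0}^{-m}\circ S^{-D}$, together with the fact that $\phi^{-1}$ is measure preserving by the same ergodicity argument. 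The only step that required any argument beyond routine bookkeeping is the measure preservation of $\phi$ itself, which is precisely where the evenness hypothesis enters.
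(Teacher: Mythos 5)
Your overall strategy is exactly the paper's: decompose $\hat A$ into the clopen cells $A_{n,h,d}$, observe that on each cell $\hat\phi = S^{d}\circ\phi\circ T_{A_0}^{n}\circ T^{-h}$ is a composition of measure preserving maps, and sum over the cells; the paper's proof is literally this one sentence. The extra step you single out as ``the heart of the lemma,'' however, is argued incorrectly. You claim that $\phi_{*}(\mu|_{A_0})$ is an $S_{B_0}$-invariant measure of the right total mass and that ergodicity of $S_{B_0}$ with respect to $\nu|_{B_0}$ forces $\phi_{*}(\mu|_{A_0})=\nu|_{B_0}$. Ergodicity does not give uniqueness of the invariant measure: a transformation can carry many distinct ergodic invariant probability measures, and two of them are merely mutually singular, which is perfectly compatible with both assigning full mass to the carrier $B_0$. (Uniqueness would follow from \emph{nearly unique} ergodicity, but that hypothesis appears only in Theorem 2, not in the setting of Theorem 1 where this lemma lives.) The correct resolution is that measure preservation of $\phi$ is part of the standing data: the conjugacy between $T_{A_0}$ and $S_{B_0}$ in this category is a conjugacy of measure preserving systems, i.e.\ $\phi$ is taken to be measure preserving as in del Junco--Rudolph--Weiss, even though the paper's stated definition of nearly continuous conjugacy omits the word. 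With that understood, your bookkeeping with the disjoint cells and countable additivity is fine and matches the paper's argument.
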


\begin{proof} 

Using the same decomposition, $\hat \phi$ is expressed on each nearly clopen set as the composition of measure preserving functions.  Likewise for $\hat \phi^{-1}$.
 
\end{proof}

We define the final carriers.  Let \[ X^* = \bigcap_{i = -\infty}^{\infty} T^{-i} (\hat A) \cap \hat \phi ^{-1} \left( \bigcap_{i = -\infty}^{\infty} S^{-i}(\hat B) \right)\] and \[ Y^* = \hat \phi (X^*). \]

\begin{lemma}

The sets $X^*$ and $Y^*$ on which $\hat \phi$  and $\hat \phi ^{-1}$ are defined are $G_\delta$ sets of full measure.

\end{lemma}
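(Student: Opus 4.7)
The plan is to build $X^*$ and $Y^*$ using only operations that preserve both the $G_\delta$ property and full measure: countable intersection, preimage under $T$ or $S$ (each a measure-preserving homeomorphism on its carrier), and preimage or image under $\hat\phi$ (a measure-preserving near-homeomorphism between $\hat A$ and $\hat B$, by the three immediately preceding lemmas). Since the previous lemma already gives that $\hat A$ is $G_\delta$ of full measure in $X$ and $\hat B$ is $G_\delta$ of full measure in $Y$, the conclusion will follow by assembling these preserved properties piece by piece.

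First I would show that $E_1 := \bigcap_{i \in \Zee} T^{-i}(\hat A)$ is $G_\delta$ of full measure in $X$. For each $i$, $T^{-i}$ is a homeomorphism of the carrier $X_0$ onto itself, so $T^{-i}(\hat A)$ is relatively $G_\delta$ in $X_0$, and because $X_0$ is itself $G_\delta$ in $X$, the set $T^{-i}(\hat A)$ is $G_\delta$ in $X$; it has full measure since $T$ preserves $\mu$. A countable intersection preserves both the $G_\delta$ property and full measure, so $E_1$ is $G_\delta$ of full measure (and is $T$-invariant, hence a carrier). Symmetrically, $E_2 := \bigcap_{i \in \Zee} S^{-i}(\hat B)$ is $G_\delta$ of full measure in $Y$ and a carrier of $S$.

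Next I would pull $E_2$ back through $\hat\phi^{-1}$. By the continuity lemma for $\hat\phi^{-1}$ on $\hat B$, the set $\hat\phi^{-1}(E_2)$ is relatively $G_\delta$ in $\hat A$, hence $G_\delta$ in $X$ (a relative $G_\delta$ in a $G_\delta$ set is the intersection of two $G_\delta$ sets in the ambient space). By the measure-preservation lemma, $\hat\phi^{-1}(E_2)$ has full measure in $\hat A$, and hence in $X$. Thus $X^* = E_1 \cap \hat\phi^{-1}(E_2)$ is the intersection of two $G_\delta$ full measure subsets of $X$, so $X^*$ is itself $G_\delta$ of full measure. For $Y^*$, I would use that $\hat\phi$ and $\hat\phi^{-1}$ are mutual inverses where defined to rewrite $Y^* = \hat\phi(X^*) = \hat\phi(E_1) \cap E_2$, and then apply the forward direction of the same continuity and measure-preservation lemmas to conclude that $\hat\phi(E_1)$ is a $G_\delta$ full measure subset of $\hat B$, giving $Y^*$ the desired properties in $Y$.

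The proof is essentially bookkeeping; no individual step presents a genuine obstacle, because each of $\hat A$, $\hat B$, $\hat\phi$, $\hat\phi^{-1}$, $T$, $S$ has already been equipped with precisely the right structural property in the earlier lemmas. The only mild subtlety to state carefully is the transfer ``relatively $G_\delta$ inside a $G_\delta$ set is $G_\delta$ in the ambient space,'' which is standard; once that is recorded, the rest is closure of $G_\delta$ under countable intersection together with the measure-preservation and homeomorphism statements already in hand.
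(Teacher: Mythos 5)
Your proof is correct and follows the same route the paper intends: the paper's own proof is just the one-line observation that $\hat A$ and $\hat B$ are open sets of measure $1$, leaving exactly the bookkeeping you carry out (closure of $G_\delta$ full-measure sets under countable intersection, under preimages by the homeomorphisms $T$ and $S$, and under $\hat\phi^{\pm 1}$) as implicit. Your write-up simply makes that implicit argument explicit.
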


\begin{proof}
Note that $\hat A$ and $\hat B$ are both open sets of measure $1$. 

\end{proof}

In conclusion of Theorem \ref{NCEKE}, we have the existence of carriers $X^* \subseteq X$, $Y^* \subseteq Y$, $T^* = T_{A_0}^{n} \circ T^{-h} \in [[T]]$, $S^* = S^{d} \in [[S]]$ and a homeomorphism $\hat \phi: X^* \rightarrow Y^*$ such that $\hat \phi \circ T^* = S^* \circ \hat \phi$ for $x \in X^*$ and $T_A$ is n.c. conjugate to $S_B$.

\section{Nearly Unique Ergodicity and Non-even n.c. Kakutani Equivalence}

In this section, we begin by defining nearly unique ergodicity, a concept introduced by Denker and Keane in \cite{DK} as strict ergodicity and later re-named nearly unique ergodicity by del Junco, Rudolph, and Weiss in \cite{JRW} to show that n.c. Kakutani equivalence differs from (measured) Kakutani equivalence.  While we include the necessary facts, for more detail and discussion, see \cite{JRW} and \cite{DK}.

\begin{defi}
A sequence of functions $\{f_i\}_{i \in \Nat}$ on a Polish probability space $(X,\tau,\mu)$ is said to converge \textbf{nearly uniformly} if there exists a nearly full subset $X_0 \subseteq X$ on which the averages converge uniformly.
\end{defi}

\begin{defi}
Suppose $\left( X, \tau, \mu, T \right)$ is a nearly continuous dynamical system and suppose $f \in L_1(\mu)$.  If the ergodic averages of $f$ \[ A_n(f) = \frac{1}{n} \sum_{i = 0}^{n-1} f(T^i) \] converge nearly uniformly to $\int f d\mu$ for all nearly bounded and nearly continuous functions $f$, we say $T$ is \textbf{nearly uniquely ergodic}.

\end{defi}
Note that this definition differs slightly from that given in \cite{JRW} by asking for near uniform convergence of all nearly continuous functions.

\begin{theorem}
If $(X,\mu,T)$ is a uniquely ergodic system on a compact metric space, then it is nearly uniquely ergodic.  
\end{theorem}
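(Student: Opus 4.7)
My plan is to reduce the statement to the classical Oxtoby characterization of unique ergodicity: on a compact metric space, if $T$ is uniquely ergodic with invariant measure $\mu$, then $A_n(g) \to \int g\, d\mu$ uniformly on $X$ for every $g \in C(X)$. Given a nearly bounded, nearly continuous $f$, I will sandwich it between genuine continuous functions on $X$ using Tietze extension and Urysohn's lemma, apply classical unique ergodicity to the approximants, and control the error term uniformly on a carrier chosen once and for all.

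Concretely, fix a nearly full $X_0 \subseteq X$ on which $f|_{X_0}$ is continuous and bounded by some $M < \infty$, and set $X^* = \bigcap_{i \ge 0} T^{-i}(X_0)$; by $T$-invariance of $\mu$, $X^*$ is a $T$-invariant $G_\delta$ of full measure, hence a carrier. For each $\eps > 0$, use inner regularity of $\mu$ on the compact metric space $X$ to select a compact $K = K_\eps \subseteq X_0$ with $\mu(K) > 1 - \eps$. Since $f|_K$ is continuous on a compact set and bounded by $M$, Tietze's extension theorem yields $\tilde f \in C(X)$ with $\|\tilde f\|_\infty \le M$ and $\tilde f = f$ on $K$. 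By outer regularity pick an open $U \supseteq K^c$ with $\mu(U) < 2\eps$, and by Urysohn's lemma pick $g \in C(X)$ with $\chi_{K^c} \le g \le \chi_U$, so $0 \le \int g\, d\mu < 2\eps$.

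For any $x \in X^*$ and any $i \ge 0$, $T^i x \in X_0$, so $|f(T^i x) - \tilde f(T^i x)| \le 2M\,\chi_{K^c}(T^i x) \le 2M\, g(T^i x)$; averaging gives $|A_n(f)(x) - A_n(\tilde f)(x)| \le 2M\, A_n(g)(x)$. Classical unique ergodicity applied to the continuous functions $\tilde f$ and $g$ yields uniform convergence on $X$ of $A_n(\tilde f)$ to $\int \tilde f\, d\mu$ and of $A_n(g)$ to $\int g\, d\mu$. Combining these with the bound $|\int \tilde f\, d\mu - \int f\, d\mu| \le 2M\eps$ shows that for all sufficiently large $n$ and all $x \in X^*$, the quantity $|A_n(f)(x) - \int f\, d\mu|$ is of order $M\eps$, uniformly in $x$.

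Since $\eps > 0$ was arbitrary and the carrier $X^*$ was fixed independently of $\eps$, this is precisely uniform convergence on the nearly full set $X^*$, establishing near unique ergodicity. The main (mild) obstacle is that $f$ need not be continuous or bounded on all of $X$, so Oxtoby's theorem cannot be applied to $f$ directly; the delicate point is that the carrier on which convergence must be uniform has to be chosen \emph{before} $\eps$, so all $\eps$-dependence is pushed into the auxiliary continuous approximants $\tilde f$ and $g$ rather than into the set $X^*$.
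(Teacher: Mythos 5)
There is a genuine gap at the Urysohn step, and it is not cosmetic: the continuous function $g$ you require does not exist in general. Your $K$ is a compact subset of the nearly full set $X_0$, and $X_0$, being merely a dense $G_\delta$ of full measure, need not contain any nonempty open set (take, e.g., the complement of a single dense orbit of an irrational rotation). Then every compact $K\subseteq X_0$ has empty interior, so $K^c$ is open and \emph{dense}; any continuous $g\ge \chi_{K^c}$ satisfies $g\ge 1$ on a dense set, hence $g\equiv 1$, and $\int g\,d\mu=1$, not $<2\eps$. (Equivalently, for Urysohn you need an open $U\supseteq \overline{K^c}=X$, forcing $U=X$.) Moreover the quantity you are trying to control really is large, so no choice of majorant can save the estimate: for the rotation example, given any $n$ there are points $x\in X^*$ so close to the deleted orbit (which lies in the open set $K^c$) that $T^ix\in K^c$ for all $0\le i<n$, whence $\sup_{x\in X^*}A_n(\chi_{K^c})(x)=1$ for every $n$. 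For such $x$ the orbit spends almost all its time in $K^c\cap X_0$, where the Tietze extension $\tilde f$ carries no information about $f$, so $|A_n(f)(x)-A_n(\tilde f)(x)|$ is not uniformly small on your carrier.

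The repair --- which is what the paper's appeal to Theorem 5.2 and Lemma 5.3 of \cite{JRW} encodes --- is to exploit near clopenness rather than compact exhaustion. A nearly clopen set $A$ satisfies $A\cap X_1=V\cap X_1=C\cap X_1$ for some open $V$ and closed $C$ in $X$ (that is what relatively clopen means), and $\mu(V)=\mu(C)=\mu(A)$ because $X_1$ has full measure. On the carrier $\bigcap_i T^{-i}(X_1)$ one has $A_n(\chi_A)=A_n(\chi_V)=A_n(\chi_C)$ pointwise, and unique ergodicity applied to continuous functions sandwiching the semicontinuous indicators gives $\liminf_n\inf_x A_n(\chi_V)(x)\ge\mu(V)$ and $\limsup_n\sup_x A_n(\chi_C)(x)\le\mu(C)$, squeezing the averages to $\mu(A)$ uniformly. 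One then approximates a nearly bounded, nearly continuous $f$ nearly uniformly by simple functions over nearly clopen level sets. Your instinct that the carrier must be fixed before $\eps$ is correct; the point you are missing is that the $\eps$-dependent approximants must be an open set and a closed set of the \emph{same} measure as $A$, not a compact subset of $X_0$ together with its Tietze extension.
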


\begin{proof}
Let $f$ be a continuous function on $X$.  The ergodic averages of $f$ converge uniformly, hence nearly uniformly.  Note that if the ergodic averages of $f$ converge nearly uniformly and $f'$ is nearly equal to $f$, then the ergodic averages of $f'$ converge nearly uniformly.  It now follows from the proof of Theorem 5.2 in \cite{JRW} that for any nearly clopen set $A \subset X$, the ergodic averages of $1_A$ converge nearly uniformly to $\mu(A)$.  By Lemma 5.3 of \cite{JRW}, $T$ is nearly uniquely ergodic.

\end{proof}

If a system $(X,\tau,\mu,T)$ is nearly continuously conjugate to a uniquely ergodic system, then it is nearly uniquely ergodic.  The converse is true as well.

For an example that nearly continuous Kakutani equivalence is strictly stronger than measured Kakutani equivalence (measurable conjugacy between induced systems on measurable sets), we refer the reader to section 6 of \cite{JRW}.

\subsection{The Proof of Theorem \ref{NCKE}}

First of all, let $X_0$ and $Y_0$ be carriers for $T$ and $S$, respectively, such that $A_0 = A \cap X_0$ and $B_0 = B \cap Y_0$ are clopen in their relative topologies, and let $\phi: A_0 \rightarrow B_0$ be the homeomorphism giving the conjugacy between $T_{A_0}$ and $S_{B_0}$.  The idea for the proof is to select subsets $A' \subseteq A_0$ and $B' \subseteq B_0$ such that the return times for points in $A'$ are smaller than the return times for the images of these points in $B'$.  (This equates to choosing sets for the bases of the frames so that the piles are all shorter than the pits with which they are paired.)

By Theorem 5.4 of \cite{JRW}, as $(X,\tau,\mu,T)$ is nearly uniquely ergodic and $A_0$ is a nearly clopen set with of positive measure, $T_{A_0}$ is nearly uniquely ergodic.  We use the nearly unique ergodicity of $T_{A_0}$ to find a set with ``nice" return times.  For $x \in A_0$, let \[r_{A_0}(x) = \min \left\{ r >0: T^r(x) \in A_0 \right\}. \]  Given an $\epsilon > 0$ there exists an $N_1 = N_1(\epsilon)$ such that \[| \frac{1}{n} \sum_{i = 1}^n r_{A_0} \left( T_{A_0}^i \right) - \int_{A_0} r_{A_0} d\mu_{A_0}| < \epsilon \] for all $n > N_1$ and almost all $x \in {A_0}$.

As $T_{A_0}$ is n.c. conjugate to $S_{B_0}$, by Corollary 8 from \cite{DK}, $S_{B_0}$ is nearly uniquely ergodic on $B_0$.  For $y \in B_0$, let \[r_{B_0}(y) = \min \left\{ r >0: S^r(y) \in B_0 \right\}.\]  Given an $\epsilon > 0$ there exists an $N_2 = N_2(\epsilon)$ such that \[ | \frac{1}{n} \sum_{i = 1}^n r_{B_0} \left( S_{B_0}^i \right) - \int_{B_0} r_{B_0} d\nu_{B_0}| < \epsilon \] for all $n > N_2$ and almost all $y \in B_0$.

Let $N = \max \{ N_1, N_2 \}$.  Select $\tilde A \subset A_0$ nearly clopen with $\mu_{A_0}(\tilde A) < \frac{1}{2N}$.  There exists $A^\prime \subset \tilde A$ s.t. the return time under $T_{A_0}$ to $A^ \prime$ is greater than or equal to $2N$ for every point, and $A^ \prime$ is nearly clopen.  

Thus, \[ r_{A^\prime}(x) \geq \sum_{i = 1}^{2N} r_{A_0}(T_{A_0}^i(x)) .\]

Let $B^\prime = \phi(A^\prime )$.  Note that for each $y \in B^\prime$, there exists an $x \in A^ \prime$ such that $\phi (x) = y \Rightarrow$  
\[ r_{B^\prime}(y) \geq \sum_{i = 1}^{2N} r_{B_0} (S_{B_0}^i (y)) .\] 

Let \[h(x) = min \{h >0: T^{-h}(x) \in A' \}. \]  Define
\[ \hat \phi := \left\{ \begin{array}{lcl} \phi(x) & \mbox{ if } & x \in A' \\
											S^{h(x)} \circ \phi \circ T^{-h(x)}(x) & \mbox{ if } & x \in (A')^C \cap X_0\end{array} \right. \]
											
Let \[ \bar B = \hat \phi (X). \]

\begin{lemma}
$\hat \phi$ is defined on a nearly full subset of $X$.
\end{lemma}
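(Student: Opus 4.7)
The plan is to identify the domain of $\hat\phi$ exactly as those points $x \in X_0$ for which either $x \in A'$ or $h(x)$ is finite, and to show this collection is a nearly full $G_\delta$ set on which the piecewise formula produces a well-defined value. First I would descend to a convenient carrier. Let $X'$ be a nearly full subset witnessing near-clopenness of $A'$, and set $X_1 = \bigcap_{i \in \mathbb{Z}} T^i(X_0 \cap X')$; by Proposition \ref{carrier} (iterated) this is a carrier of $T$, $T|_{X_1}$ is a homeomorphism, and $A' \cap X_1$ is genuinely clopen in $X_1$.

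Next I would argue the domain is open. Since $T|_{X_1}$ is a homeomorphism and $A' \cap X_1$ is clopen, each $T^h(A' \cap X_1)$ is clopen in $X_1$, so
\[ D := \bigcup_{h=0}^{\infty} T^h(A' \cap X_1) \]
is open in $X_1$ and hence a $G_\delta$ subset of $X$. A point $x \in D \setminus A'$ satisfies $T^{-h}(x) \in A'$ for some least $h \geq 1$, which is precisely the condition $h(x) < \infty$. To show $D$ has full measure, note that $D$ is $T$-forward-invariant: if $x = T^h z$ with $z \in A' \cap X_1$ then $Tx = T^{h+1}z \in D$. Because $T$ is measure preserving, forward invariance modulo null sets lifts to essential $T$-invariance, and ergodicity then forces $\mu(D) \in \{0,1\}$; since $\mu(A') > 0$, we conclude $\mu(D) = 1$, so $D$ is nearly full.

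Finally I would verify that the formula lands in $Y_0$. For $x \in A'$, $\hat\phi(x) = \phi(x) \in B' \subset Y_0$. For $x \in D \setminus A'$, $T^{-h(x)}(x) \in A'$ by construction, so $\phi \circ T^{-h(x)}(x) \in B_0 \subset Y_0$, and $S$-invariance of $Y_0$ makes $S^{h(x)} \circ \phi \circ T^{-h(x)}(x)$ a well-defined point of $Y_0$. This completes the proof of the lemma.

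I do not expect a serious obstacle at this step: the argument is Poincaré recurrence together with the observation that the sub-level sets $\{h = k\}$ are clopen images of $A'$ under powers of $T$. The real difficulty comes in the lemmas to follow, where the carefully arranged inequality $r_{B'}(\phi(x)) \geq r_{A'}(x) \geq \sum_{i=1}^{2N} r_{A_0}(T_{A_0}^i x)$ must be invoked to ensure that distinct $T$-piles above $A'$ are sent into disjoint $S$-pits above $B'$, giving injectivity of $\hat\phi$, near-clopenness of $\bar B$, and the identification $S_{\bar B} = \hat\phi \circ T \circ \hat\phi^{-1}$.
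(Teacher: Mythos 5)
Your recurrence argument correctly establishes the literal statement under the narrow reading that ``defined'' means ``$h(x)<\infty$ and the formula produces a point of $Y_0$'': the set $D=\bigcup_{h\ge 0}T^h(A'\cap X_1)$ is indeed open in a carrier, essentially $T$-invariant, and of full measure by ergodicity since $\mu(A')>0$. In that respect you are actually more careful than the paper, which handles finiteness of $h$ only implicitly. But this is not what the lemma is doing in the paper, and the substantive content is missing from your proposal.

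The paper's proof of this lemma is precisely the verification that each pile fits inside its pit, i.e.\ that $r_{A'}(x)\le r_{B'}(\phi(x))$ for almost every $x\in A'$. This is the only place in the proof of Theorem \ref{NCKE} where the hypotheses $\mu(A)>\nu(B)$ and nearly unique ergodicity are used: since $\phi$ conjugates $T_{A_0}$ to $S_{B_0}$ and $B'=\phi(A')$, the return time of $x$ to $A'$ under $T_{A_0}$ equals the return time of $\phi(x)$ to $B'$ under $S_{B_0}$, call it $k\ge 2N$, so $r_{A'}(x)$ and $r_{B'}(\phi(x))$ are sums of $k$ consecutive values of $r_{A_0}$ and $r_{B_0}$ respectively; nearly unique ergodicity pins these averages to within $\epsilon$ of $1/\mu(A)$ and $1/\nu(B)$, and choosing $\epsilon<\frac{1}{2}\left(\frac{1}{\nu(B)}-\frac{1}{\mu(A)}\right)$ forces the pit to be deeper than the pile. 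You instead assert the inequality $r_{B'}(\phi(x))\ge r_{A'}(x)$ as ``carefully arranged'' and defer its use to later lemmas. It has not been arranged: the construction of $A'$ only gives matching \emph{lower} bounds $r_{A'}(x)\ge\sum_{i=1}^{2N}r_{A_0}(T_{A_0}^i x)$ and $r_{B'}(y)\ge\sum_{i=1}^{2N}r_{B_0}(S_{B_0}^i y)$, which do not compare $r_{A'}$ with $r_{B'}\circ\phi$, and none of the subsequent lemmas in the paper supplies the comparison. Without it the construction breaks --- a pile overflowing its pit would send two points to the same location or land $S^{h(x)}\circ\phi\circ T^{-h(x)}(x)$ past the next return to $B'$, so the stated $\hat\phi^{-1}$ would not invert $\hat\phi$. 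So the key idea of the proof is absent, not merely postponed.
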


\begin{proof} Think in terms of piles and pits as in the proof of Theorem \ref{NCEKE}.  Here, we chose $A^\prime$ to be the base of the piles and $B^\prime$ to be the top of the pits.  So long as all of the pits are deeper than the piles with which they are paired via $\phi$, we may move almost all of $X$ into $Y$.  By choice of $A'$ and $B'$, for points in these sets, the return times to the set are at least $\sum_{i = 1}^{2N} r_{A_0}(T_{A_0}^i(x))$ and $\sum_{i = 1}^{2N} r_{B_0}(S_{B_0}^i(x))$ respectively.  By nearly unique ergodicity of $T_{A_0}$, as $2N > N$, we have that:
\[ \begin{array}{rrcl} {}&| \frac{1}{2N} \sum_{i = 1}^{2N} r_{A_0}(T_{A_0}^i(x)) - \int_A r_{A_0}(x)d\mu_A | & < & \epsilon \\
\Rightarrow &| \frac{1}{2N} \sum_{i = 1}^{2N} r_{A_0}(T_{A_0}^i(x))  - \frac{1}{\mu(A)} | & < & \epsilon \\
\Rightarrow & \frac{1}{2N} \sum_{i = 1}^{2N} r_{A_0}(T_{A_0}^i(x)) &<& \frac{1}{\mu(A)} + \epsilon \end{array} \] for almost every $x \in A^\prime$.

Similarly, by nearly unique ergodicity of $S_{B_0}$ and as $2N > N$, we have that:
\[ \begin{array}{rrcl} {}&| \frac{1}{2N} \sum_{i = 1}^{2N} r_{B_0}(S_{B_0}^i(y)) - \int_B r_{B_0}(y)d\nu_B | & < & \epsilon \\
\Rightarrow & | \frac{1}{2N} \sum_{i = 1}^{2N} r_{B_0}(S_{B_0}^i(y))  - \frac{1}{\nu(B)} | & < & \epsilon \\
\Rightarrow & \frac{1}{2N} \sum_{i = 1}^{2N} r_{B_0}(S_{B_0}^i(y)) &>& \frac{1}{\nu(B)} - \epsilon \end{array} \] for almost every $y \in B^\prime$.

If $\epsilon < \frac{1}{2} \left( \frac{1}{\nu(B)} - \frac{1}{\mu(A)} \right)$, each pit is deeper than the pile with which it is paired, allowing $X_0$ to be mapped into $Y_0$ as the subset $\bar B$.  
\end{proof}

\begin{lemma}

$\hat \phi$ preserves the order of orbits.

\end{lemma}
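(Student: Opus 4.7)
The plan is to pick two points $x_1, x_2$ in the carrier of $\hat \phi$ with $x_2 = T^n(x_1)$ for some $n > 0$ and produce an explicit $n' > 0$ such that $\hat \phi(x_2) = S^{n'}(\hat \phi(x_1))$. Writing $\tilde x_i = T^{-h(x_i)}(x_i) \in A'$ for the base of the pile containing $x_i$ and $h_i = h(x_i)$, the definition of $\hat \phi$ gives $\hat \phi(x_i) = S^{h_i}(\phi(\tilde x_i))$, so the task reduces to transporting $\phi(\tilde x_1)$ and $\phi(\tilde x_2)$ onto a common pit and tracking the signed $S$-displacement between the resulting exponents.

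First I would dispose of the case $\tilde x_1 = \tilde x_2$: both points sit in the same pile, $h_2 = h_1 + n$, and directly $\hat \phi(x_2) = S^n(\hat \phi(x_1))$. Otherwise $\tilde x_2 = T_{A'}^p(\tilde x_1)$ for some $p \geq 1$. Since $A' \subset A_0$, I would rewrite $T_{A'}^p$ as a concrete power $T_{A_0}^K$ of the $T_{A_0}$-induced map, then apply the hypothesized conjugacy $\phi \circ T_{A_0} = S_{B_0} \circ \phi$ iteratively to obtain $\phi(\tilde x_2) = S_{B_0}^K(\phi(\tilde x_1)) = S^L(\phi(\tilde x_1))$, where $L$ is the total $S$-displacement from $\phi(\tilde x_1)$ to $\phi(\tilde x_2)$, equivalently the combined depth of the first $p$ pits below $\phi(\tilde x_1)$. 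Substituting yields $\hat \phi(x_2) = S^{h_2 + L - h_1}(\hat \phi(x_1))$, so it suffices to check $h_2 + L - h_1 > 0$.

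This positivity is handed to me by the preceding lemma, whose content is precisely that each pit is strictly deeper than the pile with which it is paired. Since $x_1$ lies in the pile above $\tilde x_1$ we have $h_1 < r_{A'}(\tilde x_1)$, and since $p \geq 1$ we have $L \geq r_{B'}(\phi(\tilde x_1)) > r_{A'}(\tilde x_1)$. Together with $h_2 \geq 0$ this gives $h_2 + L - h_1 \geq 1 > 0$, completing the argument. I expect the only bookkeeping hurdle to be the translation between the pile-base $A'$ and the conjugacy-base $A_0$; once $T_{A'}^p$ is expressed as a power of $T_{A_0}$, the conjugacy transports the step count cleanly from the $T$-side to the $S$-side and the positivity is an immediate consequence of the pits-deeper-than-piles estimate already established.
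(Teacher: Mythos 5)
Your proof is correct and follows essentially the same route as the paper's: split into the same-pile and different-pile cases and rely on the fact, supplied by the preceding lemma, that each pit is strictly deeper than the pile it is paired with. You simply make the paper's informal ``pile to the left maps to pit to the left'' argument quantitative by computing the explicit $S$-exponent $h_2 + L - h_1$ and checking its positivity.
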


\begin{proof}  

Suppose $x_1$ and $x_2$ are on the same orbit.  Let $h_1 = h(x_1)$ and $h_2 = h(x_2)$.  If $T^{-h_1}(x_1) = T^{-h_2}(x_2)$, $x_1$ and $x_2$ lie in the same pile and map to the same pit to depths $h_1$ and $h_2$ respectively, observing order.  If $T^{-h_1}(x_1) \neq T^{-h_2}(x_2)$, the points sit in different piles and map to different pits.  If $x_2 = T^m(x_1)$, $m >0$, then $x_1$ is in a pile to the left of $x_2$ and hence $\hat \phi(x_1)$ is in a pit to the left of the pit for $\hat \phi(x_2)$.  Hence, order is preserved.

\end{proof}
 
\begin{lemma}
$\hat \phi$ is nearly continuous.
\end{lemma}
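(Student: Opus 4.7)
The plan is to mimic the strategy used already for $\hat\phi$ in Theorem \ref{NCEKE}: decompose the domain of $\hat\phi$ into a countable disjoint collection of nearly clopen pieces, and observe that on each piece $\hat\phi$ agrees with a fixed composition of near homeomorphisms. For each integer $h \geq 0$, let
\[ C_h = \{ x \in X_0 : h(x) = h \}. \]
Then $C_0 = A'$, and for $h > 0$,
\[ C_h = T^h(A') \setminus \bigcup_{i=0}^{h-1} T^i(A'). \]
The sets $C_h$ are pairwise disjoint and, by the previous lemma, their union is nearly full. On $C_h$, the definition of $\hat\phi$ collapses to the fixed formula
\[ \hat\phi|_{C_h} = S^h \circ \phi \circ T^{-h}. \]

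Next, I would verify that each $C_h$ is nearly clopen. Since $A'$ was chosen to be nearly clopen and $T$ is a near homeomorphism of $X_0$, every image $T^i(A')$ is nearly clopen; nearly clopen sets form a Boolean algebra, so the finite combination defining $C_h$ is also nearly clopen. By intersecting countably many such witnessing nearly full sets together with the carriers on which $T^{\pm 1}$, $S^{\pm 1}$, and $\phi$ are homeomorphisms, I produce a single nearly full carrier $X^*$ on which all of the $C_h$ are genuinely clopen in the relative topology of $X^*$ and all of $T$, $T^{-1}$, $\phi$, and every power $S^h$ are continuous.

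On $X^*$, the formula $S^h \circ \phi \circ T^{-h}$ is a finite composition of maps continuous in the relative topology of $X^*$, so $\hat\phi$ is continuous on each $C_h \cap X^*$. Because the pieces $C_h \cap X^*$ are disjoint and each is open in $X^*$, every point of $\bigcup_h C_h \cap X^*$ has a neighborhood (inside $X^*$) meeting only its own piece; continuity on the individual pieces therefore glues to continuity on the union. Thus $\hat\phi$ is continuous on a nearly full subset of $X$.

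The only delicate point is bookkeeping: one must ensure that a \emph{single} nearly full $X^*$ works simultaneously for countably many conditions. This is where the structural fact that a countable intersection of nearly full $G_\delta$ sets remains nearly full (and the Boolean algebra property of $\mathcal{A}$) does all of the work, so no genuinely new idea beyond that of Theorem \ref{NCEKE} is required. An analogous argument, decomposing $\bar B$ by the depth parameter appearing in $\hat\phi^{-1}$, would establish near continuity of the inverse if needed.
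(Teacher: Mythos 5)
Your proposal is correct and follows essentially the same route as the paper: the paper's (one-line) proof likewise decomposes $X_0$ into the sets where $h(x)=h$, each clopen in the relative topology, on which $\hat\phi$ is the fixed composition $S^h\circ\phi\circ T^{-h}$ of continuous maps. Your version simply supplies the bookkeeping (Boolean algebra of nearly clopen sets, intersecting countably many carriers) that the paper leaves implicit.
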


\begin{proof}

Observe that one can decompose $X$ into sets $B_h$, clopen in $X_0$, for which $\hat \phi(x) = S^h \circ \phi \circ T^{-h}(x)$ for all $x \in B_h$.

\end{proof}

% Let $x_0 \in X_0$, and select $\delta_1(x_0) > 0$ small enough that if $d(x,x_0) < \delta_1$, then $h(x) = h(x_0)$.  In other words, if $x$ is close enough to $x_0$, then they enter $A'$ at the same time along the backward orbit.  Points which start close enough together map under the same composition of continuous functions: \[ \begin{array}{lcl} \hat \phi(x) &=& S^{h(x)} \circ \phi \circ T^{-h(x)} (x)\\  \hat \phi (x_0) & = & S^{h(x)} \circ \phi \circ T^{-h(x)}(x_0) \end{array}. \]  As this function is continuous, given an $\epsilon(x_0) >0$, there exists $\delta_2(x_0)$ such that $d(x,x_0) < \delta_2 $ \[ \begin{array}{rlcl} \Rightarrow & \epsilon & > & |S^{h(x)} \circ \phi \circ T^{-h(x)} (x) - S^{h(x)} \circ \phi \circ T^{h(x)} (x_0)| \\ \Rightarrow & \epsilon & > & |\hat \phi(x) - \hat \phi (x_0)| , \end{array} \] and  $\hat \phi$ is continuous at $x_0$.  Let $\delta = \min \{ \delta_1, \delta_2 \}$.  If $d(x,x_0) < \delta$, $|\hat \phi (x) - \hat \phi (x_0) | < \epsilon$.  

\begin{lemma}
$\hat \phi$ has a nearly continuous inverse.
\end{lemma}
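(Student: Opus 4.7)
The plan is to mirror the construction of $\hat\phi$ on the pit side. Since every pit is strictly deeper than the pile paired with it under $\phi$, for each $y$ in a pit we can identify the unique point in $B'$ that tops the pit, measure how far down $y$ sits, and then apply $\phi^{-1}$ and climb up the corresponding pile by the same amount. Concretely, for $y \in B'$ set $\hat\phi^{-1}(y) = \phi^{-1}(y)$, and for $y \in \bar B \setminus B'$ let
\[ D(y) = \min\{D > 0 : S^{-D}(y) \in B'\} \qquad \text{and} \qquad \hat\phi^{-1}(y) = T^{D(y)} \circ \phi^{-1} \circ S^{-D(y)}(y). \]

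The first thing to check is that $\hat\phi^{-1}$ is actually the two-sided inverse of $\hat\phi$ on the relevant carriers. If $x \in (A')^c$ with $h(x) > 0$, then by construction $\hat\phi(x) = S^{h(x)} \phi(T^{-h(x)} x)$, and the pit--deeper--than--pile inequality established in the previous lemma (from $\epsilon < \tfrac{1}{2}(1/\nu(B) - 1/\mu(A))$) guarantees $S^{-i}(\hat\phi(x)) \notin B'$ for $0 < i < h(x)$, so $D(\hat\phi(x)) = h(x)$ and the composition telescopes to $x$. The opposite composition is analogous: for $y$ in a pit with $D(y) = D$, the pile over $T^D \phi^{-1}(S^{-D} y)$ has height at least $D+1$, so the preimage lies in $(A')^c$ with $h$-value equal to $D$.

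For near continuity, I would decompose $\bar B$ into the level sets $B_D = \{ y \in \bar B : D(y) = D \}$ for $D \geq 0$. On each $B_D$ the map $\hat\phi^{-1}$ is the composition $T^D \circ \phi^{-1} \circ S^{-D}$ of maps continuous in the relative topology on the appropriate carrier, so continuity on $B_D$ is automatic; thus the whole statement reduces to showing that $\{B_D\}_{D \geq 0}$ is a nearly clopen partition of a nearly full subset of $\bar B$. Nearly clopenness is routine: for $D \geq 1$,
\[ B_D \;=\; S^D(B') \;\cap\; \bigcap_{i=1}^{D-1} S^i\bigl((B')^c\bigr), \]
which is a finite intersection of images of the nearly clopen sets $B'$ and $(B')^c$ under the near homeomorphism $S$, hence nearly clopen.

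The only genuine issue is ensuring that the union $\bigcup_D B_D$ is nearly full in $\bar B$, i.e.\ that $D(y)$ is finite almost everywhere. This is the step that deserves care, and it parallels the finiteness argument for $n(x)$ in the proof of Theorem~\ref{NCEKE}: ergodicity of $S$ together with the positivity of $\nu(B')$ forces the $S$-orbit of almost every $y \in \bar B$ to hit $B'$ in negative time, and since pits are strictly deeper than piles, no $S$-orbit is ``swallowed'' beneath $B'$ outside a null set. Once finiteness of $D$ on a nearly full subset of $\bar B$ is in hand, intersecting with the carrier of $S$ and with $\hat\phi$ of the carrier for $\hat\phi$ produces a carrier $Y^{**} \subseteq \bar B$ on which $\hat\phi^{-1}$ is defined, continuous, and inverse to $\hat\phi$, completing the proof.
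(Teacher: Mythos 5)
Your proposal is correct and takes essentially the same route as the paper: the paper defines $\hat\phi^{-1}$ by exactly the formula you give (with $H(y)$ in place of your $D(y)$) and establishes continuity via the same piecewise-clopen decomposition by depth. You supply more detail than the paper does — in particular the verification that $D(\hat\phi(x)) = h(x)$ via the pit-deeper-than-pile inequality, and the finiteness of $D$ almost everywhere — where the paper simply writes ``obviously.''
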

\begin{proof}
Let $H(y) = \min \{H > 0: S^{-H}(y) \in B' \}.$  
Define \[ \hat \phi^{-1} := \left\{ \begin{array}{lcl} \phi^{-1}(y) & \mbox{ if } & y \in B' \\
											T^{H(y)} \circ \phi^{-1} \circ S^{-H(y)}(y) & \mbox{ if } & y \in (B')^C \cap Y_0 \end{array} \right. \]  Obviously, $\hat \phi ^{-1} ( \hat \phi (x)) = x$ and $\hat \phi ( \hat \phi ^{-1} (y)) = y$.  $\hat \phi ^{-1}$ is continuous as it is piece-wise continuous on a clopen decomposition of $X_0$.
\end{proof}

Finally, as $\hat \phi$ is nearly continuous, preserves the order along orbits, and has a nearly continuous inverse $\hat \phi$ is a nearly continuous conjugacy between $T$ on $X$ and $S_{\bar B}$ on $\bar B$.  Note that $\bar B$ is nearly clopen by construction.

We give an applications of Theorem\ref{NCKE} which comes into play in the following section.

\begin{lemma}\label{alpha}

Given an irrational rotation (or adding machine) $\left(Y,\tau, \nu, S \right)$, for any value $\alpha$, $0 < \alpha < 1$, there exists a nearly clopen set $B$ with $\nu(B) = \alpha$ such that $S_B$ is nearly continuously conjugate to an irrational rotation ($\alpha$ irrational), or an adding machine ($\alpha$ rational).
\end{lemma}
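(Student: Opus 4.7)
\subsection*{Proof proposal}

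The plan is to apply Theorem \ref{NCKE} with a \emph{target} system $T$ chosen to be an irrational rotation if $\alpha$ is irrational, or an adding machine if $\alpha$ is rational, on some Polish probability space $(X,\tau,\mu)$. Such a $T$ is uniquely ergodic, hence nearly uniquely ergodic. By \cite{Dykstra} for irrational rotations and \cite{RandR2} for adding machines, $T$ and $S$ lie in a common nceke class, so there exist nearly clopen $A_0\subset X$ and $B_0\subset Y$ with $\mu(A_0)=\nu(B_0)$ and a nc conjugacy $\phi_0:A_0\to B_0$ intertwining $T_{A_0}$ and $S_{B_0}$. By further inducing on subsets of $A_0$ and their $\phi_0$-images, one may arrange $\mu(A_0)=\nu(B_0)=\alpha$.

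The crux of the argument is to break this equal-measure symmetry in order to access the strict-inequality hypothesis of Theorem \ref{NCKE}. Since $T_{A_0}$ is nc conjugate to $S_{B_0}$, it too lies in the nceke class, so $T$ and $T_{A_0}$ are themselves nceke. This produces nearly clopen $A_1\subset X$ and $A_2\subset A_0$ that are equal in normalized measure in their respective probability spaces, i.e.\ $\mu(A_1)=\mu(A_2)/\alpha$, with $T_{A_1}$ nc conjugate to $(T_{A_0})_{A_2}=T_{A_2}$. Setting $B_2=\phi_0(A_2)\subset B_0$, absolute-measure preservation of $\phi_0$ (which holds because $\mu(A_0)=\nu(B_0)$) gives $\nu(B_2)=\mu(A_2)=\alpha\mu(A_1)$, and $\phi_0|_{A_2}$ is a nc conjugacy between $T_{A_2}$ and $S_{B_2}$. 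Composing the two conjugacies yields a nc conjugacy $\phi_1:A_1\to B_2$ between $T_{A_1}$ and $S_{B_2}$, and the measures satisfy $\mu(A_1)>\nu(B_2)>0$ with ratio $\nu(B_2)/\mu(A_1)=\alpha$.

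Applying Theorem \ref{NCKE} with $A=A_1$ and $B=B_2$ yields a nearly clopen $\bar B\subset Y$ with $S_{\bar B}$ nc conjugate to $T$. The extended homeomorphism $\hat\phi:X\to\bar B$ constructed in the proof of Theorem \ref{NCKE} is, on each piece of its clopen decomposition, a composition of $\phi_1$ with powers of $T$, $T_{A_1}$, and $S$; since $\phi_1$ rescales absolute measure by the factor $\nu(B_2)/\mu(A_1)=\alpha$ while the other factors are measure-preserving, we obtain $\nu(\bar B)=\alpha\cdot\mu(X)=\alpha$. Setting $B=\bar B$ gives the required nearly clopen set. The principal obstacle is the claim used in the crux that $T_{A_0}$ lies in the nceke class, equivalently that this class is closed under passage to induced maps on nearly clopen subsets of positive measure; this closure is implicit in the constructions of \cite{Dykstra} and \cite{RandR2} and would need to be invoked or re-derived explicitly in the formal write-up.
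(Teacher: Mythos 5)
Your overall strategy (reduce to Theorem \ref{NCKE} by manufacturing a strict measure inequality between conjugate induced systems, then read off $\nu(\bar B)=\alpha$ from the scaling factor of the extension $\hat\phi$) is the same as the paper's, and your measure bookkeeping at the end is correct. But the step you yourself flag as the ``principal obstacle'' is a genuine gap, not a formality. You need $T$ to be nceke to $T_{A_0}$, which you derive from the claim that the nceke class of irrational rotations is closed under passing to induced maps on nearly clopen sets of positive measure. That closure statement is essentially as strong as Lemma \ref{alpha} itself: if you try to prove it by composing the known conjugacy $T_E\cong R'_{E'}$ (with $\mu(E)=\nu'(E')$) with further inducing on $C\cap E$, the normalized measures come out as $\mu(C\cap E)/\mu(C)$ versus $\mu(C\cap E)$, i.e.\ unequal, and repairing that inequality is exactly an application of Theorem \ref{NCKE} that lands you back at the statement you are trying to prove. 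So as written the crux is circular-adjacent, and it is not ``implicit'' in \cite{Dykstra} or \cite{RandR2} in any way you can simply cite.

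The paper breaks this circle with a concrete construction that your proposal is missing: for irrational $\alpha$ it takes $T$ to be the rotation of the circle by $\alpha$ itself and $A=[0,\alpha]$, and uses the elementary fact that the first-return map of this rotation to $[0,\alpha]$ is again an irrational rotation (an interval exchange of two intervals); for rational $\alpha=p/q$ it takes an adding machine whose first tower has height $q$ and lets $A$ be the first $p$ levels, so that $T_A$ is again an adding machine. With $T_A$ explicitly identified as a rotation (or adding machine) of a set of measure exactly $\alpha$, the strict inequality needed for Theorem \ref{NCKE} comes for free from normalization ($\mu_A(C_A)=\mu(C_A)/\alpha>\mu(C_A)=\nu(D)$), with no closure property required. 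A secondary issue in your write-up: ``one may arrange $\mu(A_0)=\nu(B_0)=\alpha$'' by shrinking only works when $\alpha$ does not exceed the measure of the sets furnished by the equivalences of \cite{Dykstra} and \cite{RandR2}, and for $\alpha$ close to $1$ this is not guaranteed; the paper's version never needs the conjugate sets to have measure $\alpha$, only the explicitly constructed $A$.
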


\begin{proof}

For $\alpha$ irrational, let $\left( X, \tau, \mu, T \right)$ be the irrational rotation of the unit interval by $\alpha$.  Let $A$ be the interval $[0, \alpha ]$, and induce on this set.  It is not hard to see that this induced transformation may be represented by the interval exchange where $[0, 1 - n \alpha ]$ maps to $[(n + 1) \alpha -1, \alpha ]$ and $[ 1 - n \alpha, \alpha ]$ maps to $[0, (n+1)\alpha - 1 ]$ where $n = \lfloor \frac{1}{\alpha} \rfloor$.  Thus, $T_A$ is an irrational rotation.  %In fact, if $\alpha = \frac{1}{a_1 + \frac{1}{a2 + \cdots}}$, then $T_A$ is the irrational rotation by $\frac{1}{a_2 + \frac{1}{a3 + \cdots}}$.

Let $(Y,\tau,\nu,S)$ be as in the hypothesis of the lemma.  As $S$ and $T$ are n.c. even Kakutani equivalent as well as nearly uniquely ergodic, there exists nearly clopen subsets $C_X \subset X$ and $C_Y \subset Y$ such that $T_{C_X}$ and $S_{C_Y}$ are n.c. conjugate.  $T^k(C_X)$ intersects with $A$ as a nearly clopen set of positive measure for some $k > 0$.  Then $\phi \circ T^k$ is an n.c. conjugacy between induced systems on $C_A = T^k(C_X) \cap A$ and $D = S^k \circ \phi (C_X \cap T^{-k}(A))$ via the map $T^k \circ \phi$ where $\phi$ is the n.c. conjugacy between $T_{C_X}$ and $S_{C_Y}$.  As $\mu(C_X) = \nu(C_y)$, by Theorem \ref{NCKE} $\mu(C_A) = \nu(D)$ and $\mu_A(C_A) > \nu(D)$.  As these systems are also nearly uniquely ergodic, there exists a nearly clopen subset $B \subset Y$ such that $T_A$ is nearly continuously conjugate to $S_B$.  Note that $\nu(B) = \alpha$. 

Next, consider the case where $\alpha$ is rational.  Write $\alpha$ as $\frac{p}{q}$.  Build an adding machine so that the first canonical tower has height $q$, for instance $\mathbb{Z} / q \mathbb{Z} \times \mathbb{Z} / q^2 \mathbb{Z} \times \mathbb{Z} / q^3 \mathbb{Z} \times \cdots$.  Let the set $A$ be the first $p$ levels of this tower.  Inducing on $A$  creates another adding machine which is isomorphic to the adding machine $\mathbb{Z} / p \mathbb{Z} \times \mathbb{Z} / pq \mathbb{Z} \times \mathbb{Z} / pq^2 \mathbb{Z} \times \cdots$.  So, $T_A$ is an adding machine, is nearly uniquely ergodic, and has measure $\mu(A) = \alpha = \frac{p}{q}$.  

Following along the lines for the case where $\alpha$ is irrational, we have the existence of a nearly clopen subset $B \subset Y$ such that $S_B$ and $T_A$ are n.c. conjugate and $\nu(B) = \alpha$. 
  Inducing on any almost clopen subset will then be n.c. even Kaktuani equivalent to both an irrational rotation or an adding machine.

\end{proof}

\section{Strongly Rank One}

The process of cutting and stacking an interval to define a transformation is a well known way of creating examples of dynamical systems.  For notation's sake, we describe the process.  By a stack of intervals, we mean a collection of intervals of equal width which are placed one above another.  To build a simple cutting and stacking construction using intervals of equal width at each stage, we need only to specify the number of columns into which to cut the stack during each stage and the number and placement of spacers (extra intervals) used.  At stage $i$, we have a stack $\Stack_i$ of subintervals of equal width from $[0,1]$, and one left over subinterval from which to cut spacers.  The transformation defined at stage $i$, $S_i$, translates each level to the level above, and the domain includes all but the topmost level and leftover interval .  Let $c(i)$ be the number of columns into which to cut the stack.  If $B_i$ represents the base of $\Stack_i$, and $n_i$ represents the height of the stack, cut $B_i$ into $c(i)$ intervals of equal width, forming $B_{i1}, \ldots, B_{ic(i)}$.  A column then consists of $ \{ B_{ij}, TB_{ij},T^2B_{ij},\ldots,T^{n_i-2}B_{ij} \}$.  Let $s^{+}(i,j)$ be the number of spacers, subintervals cut from the leftover interval, to place above the $j^{th}$ column $1 \leq j \leq c(i)$ during stage $i$, and let $s^{-}(i)$ denote the number of spacers to be placed below the first column.  To move from stage $i$ to $i + 1$, cut the stack of intervals into $c(i)$ columns of equal width, $w_{i+1}$, place $s^{+}(i,j)$ spacers of width $w_{i+1}$ above the $j^{th}$ column and $s^{-}(i)$ below the first column, and re-stack by placing the $i^{th}$ column (reading left to right) underneath column $i+1$ for $1 \leq i < c(i)$.  The space $Y$ is given by the collection of all intervals used--$[0,1]$.  The transformation, $S$ is the limit of the transformations $S_i$.  The collection of all intervals used at each stage generates the topology.

Adding machines and Chacon's map are classic examples of transformations built by such a process.  M. Roychowdhury and D. Rudolph \cite{RandR2} proved that all adding machines are evenly nearly continuously Kakutani equivalent.  A. Dykstra and D. Rudolph \cite{Dykstra} proved that irrational rotations of the circle belong to the same equivalence class.  Using these works as stepping stones, we define a class of transformations consisting of systems of a similar nature to Chacon's map, which we call strongly rank one, then prove they belong to the same equivalence class as irrational rotations.
  
\begin{defi}
A collection $\left\{ A_i: i \in \Nat \right\}$ of nearly clopen sets is called a \textbf{near basis} for the topology of $X$ if for any nearly clopen set $A \subset X$, $[A]= \bigcup_{i \in I}[A_i]$ for some $I \subset \Nat$.
\end{defi}

\begin{defi} We say that a system $(X,\tau,\mu,T)$ is \textbf{strongly rank one} if there exists a refining sequence of nearly clopen towers $\mathcal{T} _i$ such that the collection of levels is a near basis for the topology.
\end{defi}  
  
Before showing that strongly rank one transformations may be built from a cutting and stacking process, we include an essential theorem from Zhuravlev's thesis \cite{Zhuravlev}.

\begin{lemma}\label{Zhuravlev's}
Let $(X,\tau,\mu)$ and $(Y,\tau,\nu)$ be two Polish probability spaces, and let $\mathcal{A}$ and $\mathcal{B}$ denote (equivalence classes of) near bases for $X$ and $Y$, respectively.  If $\Phi: \mathcal{A} \rightarrow \mathcal{B}$ is a measure preserving isomorphism, then $\Phi$ is induced by a nearly continuous point map, i.e. there exists full subsets $X_0 \subseteq X$ and $Y_0 \subseteq Y$ and a measure preserving map $\phi: X_0 \rightarrow Y_0$ and $\phi(A) = \Phi(A)$ for each $A \in \mathcal{A}$.  
\end{lemma}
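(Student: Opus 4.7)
The plan is to build $\phi$ as the pointwise limit of a refining sequence of partial assignments coming from the finite Boolean subalgebras generated by initial segments of the basis. Enumerate $\mathcal{A}=\{A_i\}_{i\ge 1}$ and set $B_i=\Phi(A_i)$. By intersecting countably many nearly full sets I pass to carriers $X'\subseteq X$ and $Y'\subseteq Y$ in whose relative topologies every $A_i$ and every $B_i$ is clopen. For each $n$, let $\mathcal{P}_n$ be the finite clopen partition of $X'$ whose atoms are the nonempty intersections $\bigcap_{i\le n}A_i^{\epsilon_i}$ with $\epsilon_i\in\{0,1\}$ (interpreted as $A_i$ or its complement), and let $\mathcal{Q}_n$ be the analogous partition of $Y'$ built from $B_1,\ldots,B_n$. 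Since $\Phi$ is a measure-preserving Boolean algebra isomorphism, it induces a measure-preserving bijection $\mathcal{P}_n\to\mathcal{Q}_n$ atom by atom; write $P_n(x)$ for the atom of $\mathcal{P}_n$ containing $x\in X'$ and set $Q_n(x)=\Phi(P_n(x))$.

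The central claim is that for $x$ in some nearly full $X_0\subseteq X'$, the decreasing sequence $\{Q_n(x)\}$ of clopen subsets of $Y'$ has intersection consisting of a single point of $Y'$. Granted this, define $\phi(x)$ to be that point. Near continuity then falls out almost for free: if $x_k\to x$ in $X_0$, then for each fixed $n$ we have $x_k\in P_n(x)$ eventually (since $P_n(x)$ is clopen in $X'$), hence $\phi(x_k)\in Q_n(x)$, and the nested $Q_n(x)$ force $\phi(x_k)\to\phi(x)$ in $Y'$. Measure preservation follows from $\mu(P)=\nu(\Phi(P))$ on every atom combined with a monotone-class extension, and $\phi(A_i)=B_i$ up to near equality reads off from the fact that at every stage $n\ge i$ each atom of $\mathcal{P}_n$ is either contained in $A_i$ or disjoint from it, and correspondingly its $\Phi$-image lies in $B_i$ or in its complement.

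The main obstacle, where essentially all of the work lies, is verifying the singleton intersection property. Equivalently, one must show that $\mathcal{B}$ separates points on a nearly full subset of $Y$, so that the clopen sets $Q_n(x)$ form a neighborhood base at their (unique) intersection point. The route I would take is to use the countable Boolean algebra generated by $\mathcal{B}$ to construct a continuous measure-preserving embedding of a nearly full $Y_0\subseteq Y'$ into the Stone space $S(\mathcal{B})$, a compact metrizable zero-dimensional space carrying the pushforward of $\nu$. The hypothesis that $\mathcal{B}$ is a \emph{near basis} — every nearly clopen set is a near union of basis elements — is what forces the set of points of $Y'$ whose Stone-space fibers are non-trivial to be a null set: two points lying in exactly the same basis elements lie in exactly the same nearly clopen sets, and this equivalence relation has measurable classes which, off a null set, are singletons. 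Removing this null set leaves a nearly full $Y_0$ on which the embedding is injective and becomes a near homeomorphism onto its image; since cylinder sets in $S(\mathcal{B})$ shrink to points, the pulled-back sets $Q_n(x)\cap Y_0$ likewise shrink to a unique point. Intersecting $X'$ with the preimage of $Y_0$ then gives the desired nearly full $X_0$, and the verifications of the previous paragraph complete the argument.
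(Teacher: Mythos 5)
The paper does not actually prove this lemma; it is imported verbatim from Zhuravlev's unpublished thesis, so there is no in-text argument to compare against. Your Stone-space/refining-partition scheme is the standard route to results of this type, and most of the outline is sound: passing to a common nearly full set where all $A_i$, $B_i$ are relatively clopen, transporting the atoms of the finite partitions $\mathcal{P}_n$ to those of $\mathcal{Q}_n$, and using the near-basis hypothesis (via a countable separating family of nearly clopen sets, e.g.\ balls with null boundary, each a near-union of basis elements) to get that $\mathcal{B}$ separates points of a nearly full $Y_0$ and in fact generates its relative topology. That last point is also what you need, and do not quite spell out, for the continuity argument: a decreasing sequence of clopen sets with singleton intersection need not be a neighborhood basis of that point in a non-compact space, so you must know that $\iota:Y_0\to S(\mathcal{B})$ is a homeomorphism onto its image, not merely continuous and injective.

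The genuine gap is the nonemptiness, not the uniqueness, of $\bigcap_n Q_n(x)\cap Y_0$. Compactness of $S(\mathcal{B})$ guarantees that the nested clopen sets $\hat Q_n(x)$ shrink to a single ultrafilter, but $Y_0$ embeds in $S(\mathcal{B})$ only as a set of full measure, and nothing in your argument prevents that limit ultrafilter from lying outside $\iota(Y_0)$ for the given $x$; in that case $\bigcap_n Q_n(x)\cap Y_0=\emptyset$ and $\phi(x)$ is undefined. The sentence ``since cylinder sets in $S(\mathcal{B})$ shrink to points, the pulled-back sets $Q_n(x)\cap Y_0$ likewise shrink to a unique point'' silently assumes the point survives the pullback. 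To close this, consider the map $\sigma:X'\to S(\mathcal{B})$, $\sigma(x)=\lim_n \hat Q_n(x)$; it is continuous, and $\sigma_*\mu$ agrees with $\iota_*\nu$ on the clopen algebra of $S(\mathcal{B})$ (because $\Phi$ is measure preserving), hence on all Borel sets. Since $\iota(Y_0)$ is a Borel set (an injective continuous image of a Polish space) of full $\iota_*\nu$-measure, $\sigma^{-1}(\iota(Y_0))$ is conull in $X'$, and discarding its complement (and refining to a $G_\delta$) yields the nearly full $X_0$ on which $\phi=\iota^{-1}\circ\sigma$ is defined. A similar remark applies at the end: measure preservation plus $\phi(A_i)\subseteq B_i$ gives only that $\phi(A_i)$ is a full-measure subset of $B_i\cap Y_0$, and you should run the symmetric construction for $\Phi^{-1}$ to obtain the inverse map and upgrade $\phi$ to a near homeomorphism, which is what the paper actually uses this lemma for.
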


\begin{lemma}
A nearly continuous dynamical system $(X, \tau, \mu, T)$ is strongly rank one if and only if it is nearly continuously conjugate to a cutting and stacking of intervals consisting of one stack at each stage.
\end{lemma}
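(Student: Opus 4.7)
The plan is to prove the two directions separately; the reverse direction amounts to unpacking the topology built into a cutting and stacking construction, while the forward direction is the substantive one and relies on Lemma \ref{Zhuravlev's}. For the reverse direction, suppose $(X,\tau,\mu,T)$ is nearly continuously conjugate to a cutting and stacking of intervals consisting of a single stack $\Stack_i$ at each stage. Each $\Stack_i$ is by construction a nearly clopen tower, and $\Stack_{i+1}$ arises from $\Stack_i$ by cutting, adding spacers, and restacking, so its levels refine those of $\Stack_i$. Because the interval widths shrink to zero, the levels appearing across all stages generate the standard topology on $[0,1]$, and their equivalence classes form a near basis. Pulling these towers back through the conjugacy gives the desired refining sequence of nearly clopen towers in $X$ whose levels form a near basis.

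For the forward direction, let $(X,\tau,\mu,T)$ be strongly rank one with witnessing towers $\mathcal{T}_i$ of base $B_i$ and height $h_i$, so $\mu(\mathcal{T}_i)\to 1$. By passing to a subsequence and translating $B_{i+1}$ by a power of $T$ if necessary, I arrange $B_{i+1}\subseteq B_i$; this is routine given that the levels of $\mathcal{T}_{i+1}$ refine those of $\mathcal{T}_i$. Setting $c(i)=\mu(B_i)/\mu(B_{i+1})\in\Nat$, the base $B_i$ splits nearly into $c(i)$ sub-bases $B_i^{(1)},\ldots,B_i^{(c(i))}$ indexed by the order in which the induced map $T_{B_i}$ visits $B_{i+1}$, and this decomposes $\mathcal{T}_i$ into $c(i)$ sub-columns sitting inside $\mathcal{T}_{i+1}$ in a determined order. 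The levels of $\mathcal{T}_{i+1}$ that fall outside $\mathcal{T}_i$ are exactly the spacers a cutting-and-stacking model must insert between and beyond these sub-columns, and the spacer counts $s^+(i,j)$ and $s^-(i)$ can be read off from this arrangement.

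With this combinatorial data in hand, I perform a cutting and stacking of $[0,1]$ that mirrors it: at stage $1$, form $\Stack_1$ of height $h_1$ on an interval of width $\mu(B_1)$; at each subsequent stage, cut $\Stack_i$ into $c(i)$ columns of equal width and insert the prescribed spacers to obtain $\Stack_{i+1}$. Let $S$ be the resulting transformation on $Y=[0,1]$, and let $\Phi$ send the $j$th level of the $k$th sub-column of $\mathcal{T}_i$ to the corresponding level of $\Stack_i$. Since the combinatorial data and measures match at every stage, $\Phi$ is a measure-preserving Boolean isomorphism between the near bases of levels on each side. Lemma \ref{Zhuravlev's} then produces a nearly continuous measure-preserving map $\phi:X_0\to Y_0$ inducing $\Phi$.

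It remains to verify $\phi\circ T=S\circ\phi$: for any $x$ that, for all sufficiently large $i$, lies in $\mathcal{T}_i$ below its top level (a nearly full invariant set), both $\phi(Tx)$ and $S\phi(x)$ occupy the level of $\Stack_i$ directly above the level containing $\phi(x)$, since $\Phi$ intertwines the upward shift on each side; refining over $i$ pins them down to the same point. The main obstacle is the bookkeeping in the middle step: one must check that the reduction to $B_{i+1}\subseteq B_i$ preserves strong rank-oneness and the generated near basis, and then extract the spacer data $s^+(i,j)$, $s^-(i)$ so that $\Stack_{i+1}$ faithfully reproduces the combinatorial placement of $\mathcal{T}_{i+1}$ around the sub-columns of $\mathcal{T}_i$. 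Once that combinatorial layer is in place, Zhuravlev's Lemma carries the weight of promoting a Boolean isomorphism to a nearly continuous point conjugacy.
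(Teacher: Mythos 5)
Your proposal is correct and follows essentially the same route as the paper: both directions are handled identically, with the forward direction extracting the cut numbers $c(i)$ and spacer counts $s^{+}(i,j)$, $s^{-}(i)$ from the refining towers, building the matching interval construction, defining the measure-preserving Boolean isomorphism between the near bases of levels, and invoking Lemma \ref{Zhuravlev's} to promote it to a nearly continuous point conjugacy. The only differences are cosmetic bookkeeping (you normalize $B_{i+1}\subseteq B_i$ and take $c(i)$ as a measure ratio, where the paper reads $c(i)$ off as the number of pieces into which a level splits).
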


\begin{proof}

Suppose $(X, \tau, \mu, T)$ is strongly rank one.  Consider a refining sequence $\{ \mathcal{T} _i \}_{i = 1}^\infty $ of nearly clopen towers.  Denote by $B_i$ the base of tower $\mathcal{T}_i$, let $n_i$ be its height, and $\mathcal{P}_i^j$ for $1 \leq j \leq n_i$ be the levels of the tower as well as the name of the partition element given by the level.  At stage $i$, $X = \bigcup_{j = 1}^{n_i+1} \mathcal{P}_i^j$ and $\mathcal{P}_i^{n_i+1} = X \setminus \bigcup_{j = 1}^{n_i} \mathcal{P}_i^j $ represents the residual set and final partition element with where $\mu(\mathcal{P} _i^{n_i+1}) = \epsilon_i$.  As we move from $\mathcal{T}_i$ to $\mathcal{T}_{i + 1}$, each $\mathcal{P}_i^j = \bigcup_{k \in I_i^j} \mathcal{P}_{i+1}^k$ for some collection of indices $I_i^j \subset \mathbb{N}$ and $1 \leq j \leq n_i$.  Let $m_i = \mu(B_i)$.

To define a cutting and stacking of intervals producing a transformation which is n.c. conjugate to $(X,T)$, observe the transition from tower $\mathcal{T}_i$ to $\mathcal{T}_{i+1}$.  Let $c(i) = |I_i^1|$.  Starting from the base of $\mathcal{T}_{i+1}$, read off the names of levels according to the partition $\mathcal{P}_i$ given by $\mathcal{T}_i$.  We see complete names ($\mathcal{P}_i^1,\mathcal{P}_i^2,\ldots,\mathcal{P}_i^{n_i}$) of the tower $\mathcal{T}_i$ possibly with the name of the residual set $\mathcal{P}_i^{n_i+1}$ inserted between the complete tower names.  Let $s^{+}(i,j)$ be the number of $\mathcal{P}_i^{n_i+1}$ which occur after the $j^{th}$ appearance of the complete name of $\mathcal{T}_i$ reading up the tower $\mathcal{T}_{i+1}$, and let $s^{-}(i)$ be the number of $\mathcal{P}_i^{n_i + 1}$ which occurs before the first full tower name.

Begin with $n_1$ intervals of measure $w_1 = m_1$, and, for each $i$ and $1 \leq j \leq c(i)$, with $s^{+}(i,j) + s^{-}(i)$ intervals of width $w_i = \frac{w_1}{\Pi_{k = 1}^{i}c(k)}$.  Follow the procedure to build the cutting and stacking construction.  Let $S$ be the transformation which arises in the limit from $S_i(\mathcal{P}_i^j) = \mathcal{P}_i^{j+1}$ for $1 \leq j < n_i$ and $\mathcal{B}$ be near basis formed by the levels of the stacks.  

Let $\Phi$ be the map which arises naturally by identifying the levels of the towers for the transformation $T$ on $X$ to the sub-intervals of the stacks from the unit interval $Y$.  This is obviously a measure preserving isomorphism between the near basis consisting of nearly clopen levels from the towers for $(X,T)$ and the near basis consisting of intervals from the cutting and stacking $(Y,S)$.  By Lemma \ref{Zhuravlev's}, $\Phi$ is induced by a point map $\varphi$. $\varphi$ is nearly continuous.  Let $\epsilon > 0$ be given.  For an $x_0 \in X$, let $P$ be the element of the partition containing $x_0$ at some stage of the construction for which $\mu(P) < \epsilon$.  Let $\delta(x_0) = \frac{1}{2}d(x_0, P^C)$.  Then, for any $x$ such that $d(x,x_0) < \delta(x_0)$, $d(\varphi(x), \varphi(x_0)) < \nu(\varphi(P)) = \mu(P) < \epsilon$.  $\varphi$ preserves order.  If $x_1 = T(x_2)$, then, after some point in the construction, $x_1$ is always directly above $x_2$.  As the partitions progress, the level $P_i$ containing $x_1$ will always be directly above $P_{i-1}$ containing $x_2$.  Then, $\Phi(P_i)$ is always directly above $\Phi(P_{i-1})$ and $\varphi(x_1) = S(\varphi (x_2))$.  As the topology of the unit interval is generated by intervals whose width go to zero, $\varphi$ preserves the topologies.  Thus, $\varphi$ is a nearly continuous conjugacy.

Suppose that $(X, \tau, \mu,T)$ is nearly continuously conjugate to a cutting and stacking of the unit interval with only one stack at each stage.  As n.c. conjugacy preserves nearly clopen sets, this defines a sequence of towers for $(X,T)$ whose levels are nearly clopen, with each tower as a refinement of the previous.  These levels generate the topology.  

\end{proof}

\begin{theorem}
All strongly rank one transformations belong to the nearly continuous Kakutani equivalence class of irrational rotations.
\end{theorem}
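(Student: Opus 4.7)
The plan is to exhibit, for a given strongly rank one transformation $(X,\tau,\mu,T)$, a nearly clopen set $A \subseteq X$ whose induced map $T_A$ is nearly continuously conjugate to some adding machine. Combined with Lemma \ref{alpha} and the fact that all adding machines \cite{RandR2} and all irrational rotations \cite{Dykstra} lie in a single nceke class, this will place $T$ in that class. The bridge throughout is Theorem \ref{NCKE}, which allows us to freely adjust the measure of an induced-map domain, so that chains of n.c.\ conjugacies between induced maps can be converted into a single nceke relationship between the ambient systems.

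The first step is to verify that every strongly rank one transformation is nearly uniquely ergodic, as needed to apply Theorem \ref{NCKE}. The refining sequence $\{\mathcal{T}_i\}$ of nearly clopen towers, whose levels form a near basis, permits the following argument: any nearly clopen $C \subseteq X$ agrees, up to a set of arbitrarily small measure, with a finite union of levels of some $\mathcal{T}_i$. Within a single tower a combinatorial count shows that the ergodic averages $\frac{1}{n}\sum_{k=0}^{n-1}\chi_C(T^k x)$ converge to $\mu(C)/\mu(\mathcal{T}_i)$ uniformly on a carrier, and passing to the limit in $i$ together with Lemma 5.3 of \cite{JRW} gives near uniform convergence for all nearly continuous functions.

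For the core of the argument I invoke the preceding lemma to realize $T$ as a one-stack cutting-and-stacking of $[0,1]$ with data $(c(i), s^{+}(i,\cdot), s^{-}(i))$, and then build a nearly clopen $A \subseteq X$ inductively through the refinements $\mathcal{T}_i \to \mathcal{T}_{i+1}$ so that $A$ is carved to intersect only the subcolumns arising from the splits of $\mathcal{T}_i$ while avoiding every spacer position. The resulting $A$ is nearly clopen as a descending intersection of sets clopen in a common carrier, has positive measure coming from a convergent product in the cutting data, and $T_A$ inherits a cutting-and-stacking structure with cutting sequence $(c(i))$ and no spacers, hence is an adding machine. Arranging the construction so that $\mu(A)$ is rational, Lemma \ref{alpha} produces a nearly clopen $B \subseteq Y$ inside a given irrational rotation $(Y,R)$ with $\nu(B) = \mu(A)$ and $R_B$ n.c.\ conjugate to an adding machine. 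Since all adding machines are nceke by \cite{RandR2}, a final application of Theorem \ref{NCKE} to the induced systems $T_A$ and $R_B$ matches their two adding machines and yields nearly clopen $\tilde A \subseteq X$ and $\tilde B \subseteq Y$ with $T_{\tilde A}$ n.c.\ conjugate to $R_{\tilde B}$, establishing the desired nceke.

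The main obstacle is the inductive construction of $A$ with $T_A$ n.c.\ conjugate to an adding machine. The combinatorial choice of which subcolumns to retain at each refinement must be made consistently across all stages so as to entirely remove the spacer contributions, while keeping the descending intersection simultaneously nearly clopen in a common carrier and of positive measure. One must also verify that the resulting conjugacy is genuinely nearly continuous rather than merely measurable, which leverages the near-basis property built into the definition of strongly rank one.
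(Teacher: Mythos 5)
Your overall strategy --- find a nearly clopen $A$ with $T_A$ nearly continuously conjugate to an adding machine, then invoke Lemma \ref{alpha} together with \cite{RandR2} and \cite{Dykstra} --- is exactly the paper's, but the step you yourself flag as ``the main obstacle'' is a genuine gap, and the construction you sketch for it would not work as written. You do not need a stage-by-stage carving that ``avoids every spacer position'': the spacers introduced at stage $i+1$ are new material cut from the leftover interval, so no level of $\mathcal{T}_i$ ever meets them; the real issue is only that the return map to a union of many levels is not an odometer. Worse, your claim that the resulting set is nearly clopen ``as a descending intersection of sets clopen in a common carrier'' is false in general --- a countable intersection of clopen sets is closed, not open, in the carrier, and upgrading it to nearly clopen needs an argument you do not give. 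The idea you are missing is that a single level suffices: take $A$ to be the base (the very first interval) of the first stack. A point of $A$ rides up its column of the next stack, passes through whatever spacers sit above that column, and first returns to $A$ at the base of the adjacent column; the spacers change only the return \emph{times}, never the return \emph{map}, which is therefore exactly the odometer $\varprojlim \mathbb{Z}/\bigl(\prod_{k\le i}c(k)\bigr)\mathbb{Z}$. This $A$ is one interval, manifestly nearly clopen and of positive measure, and no inductive construction or positivity-of-a-product argument is needed.

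Two smaller points. Insisting that $\mu(A)$ be rational is an unnecessary extra constraint: Lemma \ref{alpha} already produces, for any $\alpha=\mu(A)$, a nearly clopen $B$ with $\nu(B)=\alpha$ and $S_B$ conjugate to an adding machine or an irrational rotation, and both cases are covered by \cite{RandR2} and \cite{Dykstra}. And your preliminary lemma that every strongly rank one system is nearly uniquely ergodic, while plausible, is not needed on this route: once $\mu(A)=\nu(B)$, the nceke of $T_A$ and $S_B$ hands you nearly clopen subsets of equal relative (hence equal absolute) measure with conjugate induced maps, which is the desired equivalence of $T$ and $S$ directly; near unique ergodicity is only ever invoked for the auxiliary rotations and adding machines, which are uniquely ergodic on compact spaces.
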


\begin{proof}

Let $(X,\tau,\mu,T)$ be a strongly rank one transformation.  As all strongly rank one systems are n.c. conjugate to a cutting and stacking transformation, let $c(i)$ be the cuts of equal width made at stage $i$ of the cutting and stacking, and let $s^{+}(i,j)$ be the spacers placed above column $j$ for $j = 1, \ldots, c(i)$ and $s^{-}(i)$ be the number of spacers placed below column $1$.  Let $A$ be the very first interval which we cut into $c(1)$ segments.  $A$ is nearly clopen, and inducing $T_A$ on $A$ ignores the addition of all spacers, producing the adding machine which comes from the inverse limit of $\mathbb{Z} / c(1) \mathbb{Z} \times \mathbb{Z} / c(1)c(2) \mathbb{Z} \times \mathbb{Z} / c(1)c(2)c(3) \mathbb{Z} \times \cdots \times \mathbb{Z} / \Pi_{n=1}^ic(n) \mathbb{Z} \times \cdots$.  Suppose $\mu(A) = \alpha$.

By Lemma \ref{alpha}, given any irrational rotation (or adding machine) $(Y,\tau,\nu,S)$, there exists a nearly clopen set $B \subset Y$ with $\mu(B) = \alpha$ such that $S_B$ is n.c. conjugate to either an irrational rotation or an adding machine.  Then $T_A$ is n.c. even Kakutani equivalent to $S_B$ and the result follows.
\end{proof}

\section{Conclusion}

We extended the equivalence class to include all strongly rank one transformations, such as Chacon's map.  It is interesting to note that all such transformations enjoy a canonical cutting and stacking representation relied upon in \cite{RandR2} and \cite{Dykstra}.  In order to obtain the nearly clopen sets upon which they define the conjugacy between induced transformations, they must first establish a n.c. orbit equivalence.  Here we did not need to establish an orbit equivalence as we started with a conjugacy between the induced transformations on the nearly clopen sets, leading us to shift the definition of a nearly continuous even Kakutani equivalence to just the existence of a nearly continuous conjugacy between induced maps as in the measure-theoretical analogue.  Again, we query about systems which lack a canonical cutting and stacking, a prime example being minimal isomorphisms of compact metric spaces.  All irrational rotations of a circle are known to be measure theoretically rank one.   Are all irrational rotations strongly rank one?

\end{document}